\def\ps@pprintTitle{%
    \let\@oddhead\@empty
    \let\@evenhead\@empty
    \let\@evenfoot\@oddfoot
    }
\xpatchcmd{\@todo}{\setkeys{todonotes}{#1}}{\setkeys{todonotes}{inline,#1}}{}{}
\newtheorem{proposition}{Proposition}
\newtheorem{theorem}{Theorem}
\tikzstyle{bag} = [align=center]
\begin{document}
\begin{frontmatter}
\title{Derivative-Informed Neural Operator:\\ An Efficient Framework for\\ High-Dimensional Parametric Derivative Learning}
\author[1]{Thomas O'Leary-Roseberry\corref{cor1}}
\ead{tom.olearyroseberry@utexas.edu}
\author[2]{Peng Chen}
\ead{pchen402@gatech.edu}
\author[1]{Umberto Villa}
\ead{uvilla@oden.utexas.edu}
\author[1,3]{Omar Ghattas}
\ead{omar@oden.utexas.edu}

\address[1]{Oden Institute for Computational Sciences and Engineering, The University of Texas at Austin, 201 E. 24th Street, C0200, Austin, TX 78712.}

\address[2]{College of Computing, Georgia Institute of Technology,  801 Atlantic Drive NW, Atlanta, GA, 30332.}

\address[3]{Walker Department of Mechanical Engineering, The University of Texas at Austin, 204 E. Dean Keaton Street, Austin, TX 78712.}

\cortext[cor1]{Corresponding author}

\begin{abstract}
We propose derivative-informed neural operators (DINOs), a general family of neural networks to approximate operators as infinite-dimensional mappings from input function spaces to output function spaces or quantities of interest. After discretizations both inputs and outputs are high-dimensional. We aim to approximate not only the operators with improved accuracy but also their derivatives (Jacobians) with respect to the input function-valued parameter to empower derivative-based algorithms in many applications, e.g., Bayesian inverse problems, optimization under parameter uncertainty, and optimal experimental design. The major difficulties include the computational cost of generating derivative training data and the high dimensionality of the problem leading to large training cost. To address these challenges, we exploit the intrinsic low-dimensionality of the derivatives and develop algorithms for compressing derivative information and efficiently imposing it in neural operator training yielding derivative-informed neural operators. We demonstrate that these advances can significantly reduce the costs of both data generation and training for large classes of problems (e.g., nonlinear steady state parametric PDE maps), making the costs marginal or comparable to the costs without using derivatives, and in particular independent of the discretization dimension of the input and output functions. Moreover, we show that the proposed DINO achieves significantly higher accuracy than neural operators trained without derivative information, for both function approximation and derivative approximation (e.g., Gauss--Newton Hessian), especially when the training data are limited.
\end{abstract}

\begin{keyword}
    Derivative learning, high-dimensional Jacobians, neural operators, parametric surrogates, 
    parametrized PDEs, derivative-informed dimension reduction.
\end{keyword}

\end{frontmatter}

\tableofcontents

\section{Introduction}

So-called ``neural operators'' have gained significant attention in recent years due to their ability to approximate high-dimensional parametric maps between function spaces, and have become a major research topic in scientific machine learning (SciML) \cite{BhattacharyaHosseiniKovachkiEtAl2021,FrescaManzoni2022,KovachkiLiLiuEtAl2021,LiKovachkiAzizzadenesheliEtAl2020a,LiKovachkiAzizzadenesheliEtAl2020b,LuJinKarniadakis2019,OLeary-RoseberryVillaChenEtAl22,OLearyRoseberryDuChaudhuriEtAl2021}. By offering an inexpensive approximation of high-dimensional parameter to output maps (denoted by $m \mapsto q$), neural operators are playing an increasingly critical role in making tractable the solution of PDE-constrained ``outer-loop'' problems, i.e. problems that require numerous solutions of parametric PDEs as parameter space is explored. Examples of outer-loop problems include: forward uncertainty propagation, deterministic and Bayesian inverse problems, optimal experimental design, and deterministic and stochastic optimal design and control.

A fundamental issue has not yet been addressed for neural operators: Many algorithms for outer-loop problems require accurate derivatives of functions of the outputs $F(q(m))$ taken with respect to the high-dimensional input parameters, $\nabla_m F(q(m)$. For outer-loop problems formulated as optimization problems, gradients and Hessians are important for accelerating convergence of iterative methods to (local) minimizers. For outer-loop problems that involve sampling parameter space, the availability of gradients and Hessians can greatly accelerate sampling algorithms (e.g. geometric MCMC \cite{BeskosGirolamiLanEtAl17,MartinWilcoxBursteddeetal2012} and derivative-based control variates \cite{AlexanderianPetraStadlerEtAl17,ChenGhattas21,ChenVillaGhattas19}). These derivative quantities can be efficiently computed using the Jacobian of the neural operator outputs with respect to its inputs via the chain rule: $\nabla_m F(q(m)) = \frac{\partial F}{\partial q}\nabla q$. However, errors in approximating the parametric Jacobian $\nabla q$ will be inherited in these computations, leading to inaccurate approximations of derivative quantities. This shortcoming limits the deployment of neural operators in outer-loop algorithms, since it restricts one to the use of slow-converging derivative-free methods, or to inaccurately approximate derivatives in derivative-based methods, perhaps yielding inaccurate derivative based subspaces or convergence to stationary points that do not coincide with the stationary points of the true model.

In this work we design algorithms to efficiently include high-dimensional derivative information in neural operator training to create ``\emph{derivative-informed neural operators}'' (DINOs), which faithfully approximate not only the parametric maps, but also the derivatives of their outputs with respect to the input parameters (i.e. parametric derivatives). The goal of this inclusion is two-fold

\begin{enumerate}
    \item Provide an efficient framework to improve parametric Jacobian accuracy, thus enabling more accurate derivative computations used in outer-loop algorithms.

    \item To improve function accuracy of neural operators by incorporating high-dimensional derivative information of the parametric map into the training. This added derivative information has the effect of better interpolation between training data points by imposing the constraint that the slopes at these points match the true parametric map. This can be thought of as philosophically similar to Hermite polynomial interpolation.

\end{enumerate}

In the setting we are concerned with, the forward model is parameterized by parameters $m \in \mathbb{R}^{d_M}$ with
probability distribution $m \sim \nu$, which are mapped to outputs $q(m) \in \mathbb{R}^{d_Q}$ through an implicit dependence on the solution of state equations for the state variable $u \in \mathbb{R}^{d_U}$: 
\begin{equation}\label{eq:state}
  \underbrace{q(m) = q(u(m))}_\text{QoI mapping} \text{ where } u \text{ depends on } m \text{ through } \underbrace{R(u,m) = 0}_\text{State equations}.
\end{equation}
These parametric maps usually arise via the discretization of formally infinite-dimensional maps on function spaces; the canonical example being the discretization of continuum fields arising in partial differential equations (PDEs), which is our primary focus. The underlying PDEs can be either steady-state or time-dependent; in the latter case the parameters and states can be functions in space-time, discretized as space-time vectors. For these reasons the dimensions $d_M,d_Q,d_U \in \mathbb{N}$ can be very large, though not necessarily indicative of the ``intrinsic dimensionality'' of the forward model and its derivatives, i.e. the number of basis vectors required to adequately capture the input to output map and its derivatives. 

The neural operator $f_w(m) = f(m,w):\mathbb{R}^{d_M}\times \mathbb{R}^{d_W}\rightarrow \mathbb{R}^{d_Q}$, parametrized by both $m$ and the weights $w \in \mathbb{R}^{d_W}$, is a cheap-to-evaluate neural network-based surrogate for the map $m\mapsto q$. The neural operator ``learns'' an approximation of the map by minimizing a data and/or physics informed objective or loss function via nonlinear stochastic optimization with respect to $w$. We focus specifically on neural operators as no other surrogate strategies have proven successful for very high-dimensional and nonlinear maps, however the methodologies proposed herein are applicable to other surrogate methodologies. The development of neural operators has to this point focused on approximating the functional relationship $m \mapsto q$ over the distribution $\nu$. Since approximation of the Jacobian of $f_w(m)$ with respect to $m$ is not addressed directly in this context, it is generally unreasonable to expect derivatives $\nabla f_w$ to be reliable approximations of $\nabla q$, particularly when $m$ is high-dimensional. 

We address the computational challenges associated with learning high-dimensional derivatives, by proposing dimension-independent methods for derivative learning, i.e., where the dominant offline costs measured in PDE solves, and online training costs are independent of the discretization parameters $d_Q,d_U,d_M$. When $m$ and $q$ are vectors, $\nabla q$ and $\nabla_m f_w$ are Jacobian matrices with $d_Q\times d_M$ entries. Directly computing and imposing full derivative information is intractable when $d_Q,d_M$ are large, since the number of (linearized) forward or adjoint PDE solves required to compute $\nabla q$ is the rank of $\nabla q$ that could be as high as $\min\{d_Q,d_M\}$. However, for many PDE problems, parametric derivatives can be represented compactly, i.e. the effective rank of the Jacobian can be shown to be small and independent of the discretization dimension. Jacobian compactness can be due to a number of reasons, including smoothing properties of the PDE solution, sparse observation QoI, and compactness of the prior distribution. This compactness makes representing extrinsically high-dimensional Jacobian information tractable. This property can be proven analytically for certain elliptic, parabolic and diffusive hyperbolic model problems \cite{GhattasWillcox21}, and is empirically observed for many others in model reduction \cite{AlgerChenGhattas20,BashirWillcoxGhattasEtAl08,
  ChenGhattas19a,GhattasWillcox21}, Bayesian inversion
\cite{Bui-ThanhBursteddeGhattasEtAl12, Bui-ThanhGhattas12a,
  Bui-ThanhGhattas12, Bui-ThanhGhattas13a, Bui-ThanhGhattas15,
  Bui-ThanhGhattasMartinEtAl13,ChenGhattas20,ChenVillaGhattas17, ChenWuChenEtAl19, FlathWilcoxAkcelikEtAl11,
  IsaacPetraStadlerEtAl15, MartinWilcoxBursteddeEtAl12,PetraMartinStadlerEtAl14a}, optimization
under uncertainty \cite{AlexanderianPetraStadlerEtAl17,ChenGhattas21, ChenHabermanGhattas21,
  ChenVillaGhattas19}, and
Bayesian optimal experimental design
\cite{AlexanderianPetraStadlerEtAl14, AlexanderianPetraStadlerEtAl16,
CrestelAlexanderianStadlerEtAl17, WuChenGhattas22,WuChenGhattas22b,WuOLearyRoseberryChenEtAl23}. When the dimensionality of the derivative information (denoted by $r$) throughout parameter space is independent of the continuum discretization dimensions $d_M,d_U$, we demonstrate that efficient and scalable methods can be used to compute and include Jacobian training data in neural operator training. 

We begin in Section \ref{section:dino} by formulating our goal: to solve an empirical risk minimization problem that includes the forward model Jacobian in addition to function values. We proceed in Section \ref{section:scalable_computation} by overviewing how derivative information at sample points can be efficiently computed using adjoints and randomized matrix-free methods. This amounts to only $O(r)$ additional linear solves with the same PDE operator at each sample point, which can leverage concurrency and computations already required for the (possibly nonlinear) forward solve, such as direct factorization or preconditioner construction. We then discuss how the dominant Jacobian information can be captured from its low rank (truncated) SVD representation, which can be computed matrix-free using efficient randomized methods \cite{HalkoMartinssonTropp11}. In Section \ref{section:jacobian_svd}, we discuss how the derivative training objective can be efficiently approximated through use of the reduced SVD representation of the Jacobian, which reduces the online memory and computational complexity from $O(d_Q\times d_M)$ to $O(r^2+d_Qr + d_Mr)$ per sample point. We show that matrix subsampling can be used to further reduce these costs when $r$ is large using stochastic approximation of the truncated Jacobian matrix error. While this approach is generally applicable for all neural operators, a major limitation is that it fails to properly penalize the Jacobian error in the orthogonal complements of the truncated SVD singular vectors; while these approaches benefit the function approximation, these errors in the orthogonal complements ultimately pollute predictions of derivative-based quantities.

In Section \ref{section:arch_constraints}, we overcome this issue at the architectural level by suggesting that the most efficient way to learn parametric maps with derivatives is to use architectures that exploit the intrinsic dimensionality of the map via the use of reduced basis neural operators. These architectures--which restrict their nonlinearity to only the coordinates of reduced bases of the inputs and outputs--are ideal for high-dimensional derivative learning, since the reduced basis representation precludes the learning of erroneous Jacobian information in the orthogonal complements of the neural operator's reduced bases. The dominant online training costs (per sample point) associated with the reduced basis neural operator learning scale with the dimensionality of the reduced bases $\bar{r}$, instead of the ambient discretization dimensions $d_M,d_Q$. When the reduced bases can accurately resolve the dominant Jacobian information over parameter space, these architectures are ideally for derivative learning. This simple strategy leads to a powerful approach for learning high-dimensional derivatives, and constitutes the fundamental result of this paper.

In numerical experiments Section \ref{section:numerical_results}, we consider high-dimensional derivative learning for three different parametric nonlinear PDE problems. In our numerical results we compare two reduced basis architectures with one generic encoder-decoder network and consider several different proposed training formulations, which are proposed in this work. Our numerical results demonstrate the following three takeaways. First, the inclusion of derivative information significantly improves the function approximation. Thus, for highly nonlinear problems, derivative information represents an economical means of improving function approximations, since the costs of computing Jacobian information can be made small relative to nonlinear forward model evaluations. Second, the inclusion of derivative information in the loss function for neural operator training is essential for neural operators to have accurate derivatives in high-dimensions. In many cases one can achieve accurate function approximations while simultaneously exhibiting poor approximation of derivative quantities. This empirical finding suggests that caution should be exercised when differentiating high-dimensional neural operators that are not trained using derivative information. Third, the use of reduced basis neural operators reduces costs of computing training data, makes imposition of complete derivative information in the training problem tractable, and allows the neural operator to fully learn the derivatives when the reduced bases capture the left and right singular vectors of true Jacobian over parameter space. 



\subsection{Related works}

The idea of incorporating derivative information in neural network has been suggested before in \cite{CzarneckiOsinderoJaderbergEtAl2017}. In this setting the authors restricted themselves to learning analytic functions where derivatives were easily available, and then supplementing ``synthetic derivatives'' in data-driven problems where there is no notion of a \emph{true} differentiable map, but the inclusion of synthetic derivative information had a regularizing effect and led to superior approximation. The present work differs from this work due to the emphasis on efficient methods for handling high-dimensional maps with continuum limits, where derivatives are known to exist, but learning full derivatives is computationally infeasible since the dimensions of the problem grow as resolution increases. 

A more recent preprint \cite{BigoniMarzoukPrieurEtAl2021} investigates using derivative information for constructing dimension reduced surrogates for scalar valued parametric maps via aligning the gradients of the surrogate with the true gradients of the scalar map. The focus of that paper is on using derivative sensitivity information to reduce the surrogate's input dimensionality, and not directly the training of the surrogates derivatives, as in our work. Additionally the paper does not consider high-dimensional outputs.

There are many other papers on learning parametric PDE maps in a SciML context \cite{BhattacharyaHosseiniKovachkiEtAl2021,FrescaManzoni2022,KovachkiLiLiuEtAl2021,LiKovachkiAzizzadenesheliEtAl2020a,LiKovachkiAzizzadenesheliEtAl2020b,NelsenStuart2020,OLeary-RoseberryVillaChenEtAl22,OLearyRoseberryDuChaudhuriEtAl2021,RaissiPerdikarisKarniadakis2019}. The algorithms proposed in this work are suitable additions for any of these methods to learn high-dimensional derivative information and additionally improve approximations of the maps themselves. As we will demonstrate, derivative learning is particularly efficient for reduced basis neural network architectures \cite{BhattacharyaHosseiniKovachkiEtAl2021,lu2022comprehensive,OLeary-RoseberryVillaChenEtAl22,OLearyRoseberryDuChaudhuriEtAl2021}. This work concerns \emph{parametric derivatives}, not to be confused with \emph{spatial derivatives}, which have been addressed in neural operator construction in various works \cite{LiKovachkiAzizzadenesheliEtAl2020a,YuLuMengEtAl2022}.

\subsection{Contributions}

The central contribution of this paper is the development of novel loss functions and computationally efficient stochastic optimization problem formulations to incorporate derivative information in neural operator learning. Our proposed methods not only enable the use of neural operators in derivative-based outer-loop algorithms, but additionally improve the fidelity of the neural operator in the classical mean squared error loss. This is achieved by the use of dimension reduction algorithms whose dominant costs per sample point are independent of the discretization dimensions $d_Q,d_M$, and instead depend on the intrinsic dimension $r$ of the Jacobian. We begin by reviewing how derivative information can be computed efficiently using matrix-free adjoint methods, which require only $O(r)$ linear solves per sample point to capture Jacobian information. These linear solves are often inexpensive relative the nonlinear forward evaluation of the parametric map, since they involve the same linear operator. In order to make the online memory and computational costs of derivative learning independent of $d_Q\times d_M$, we consider two main approaches delineated below.

\textbf{Efficient formulations of derivative learning for general neural operators}: We consider techniques based on truncated SVD to impose only the dominant Jacobian information in the truncated SVD basis, reducing the computational costs from $O(d_Qd_M)$ to $O(r^2+d_Qr + d_Mr)$ where $r$ is the rank of the Jacobian. We refer to this general approach as truncated Jacobian training. When demonstrate that when $r$ is large, the computational cost can be reduced to $O(k^2+kd_Q + kd_M)$ per sample point, by making use of matrix subsampling techniques that randomly choose $k$ rows and columns of the truncated Jacobian, and prove that this provides an asymptotically equivalent objective function in Proposition \ref{prop:submatrix_svd_convergence}. The techniques based on truncated SVD are highly scalable, independent of the neural network architecture and can significantly improve the approximation of the parametric map by imposing slope constraints in the directions where the map is changing most, but alone are not sufficient to guarantee high parametric Jacobian accuracy. 

\textbf{Reduced basis derivative learning}: We show that reduced basis neural operators can be used to provide an efficient and scalable framework for accurate parametric derivative learning. Regarding this task, the fundamentally important feature of these neural operators is that their reduced bases preclude learning erroneous Jacobian information in the orthogonal complements of their $\bar{r}$ dimensional reduced bases, as we prove in Proposition \ref{prop:jacobian_ridge_orthogonality}. As a consequence of this result, we prove that the associated derivative learning problem is dimension-independent (Theorem \ref{theorem:dim_independent_dino}) for reduced basis neural operators, meaning that computing derivative training data requires only $O(\bar{r})$ linear solves and the online memory and computational costs of derivative learning are $O(\bar{r}^2)$ instead of $O(d_Qd_M)$. This is the major result of this paper and is demonstrably the best approach we considered, as evidenced by numerical experiments. 

We are unaware of any techniques for the efficient training of high-dimensional parametric surrogates (neural operators) on derivative information. We restrict ourselves to learning the first order parametric derivative (i.e. parametric Jacobian), but the ideas can be to be extended to higher derivatives using similar ideas. 

The optimization formulations we propose are general and can be applied to any parametric operator learning task; i.e., they apply to both steady-state and time-dependent problems. However, the efficiency of the proposed methods relies on: (1) the ability to efficiently compute Jacobian information (e.g., via the method of adjoints), and (2) the compressibility of the Jacobian information. Regarding the first issue, while the adjoint method is suitable for a very large class of PDE problems, there are some problems for which it does not work. For example high Reynolds number fluid simulations via direct numerical simulations or large eddy simulations over long time horizons are known to have chaotic adjoints, while Reynolds averaged Navier Stokes models do not have this property and the adjoint method is still suitable \cite{ChenVillaGhattas19}. Concerning the second issue, the approaches we propose in this work rely on the compressibility of high-dimensional Jacobian information; DINO may not be suitable for problems that have high effective rank of the Jacobian. This is typical of hyperbolic problems such as the wave equation \cite{GhattasWillcox21}, advection-dominated transport \cite{FlathWilcoxAkcelikEtAl11}, or high Reynolds number flow. In the current presentation we rely on the decay of Jacobian singular values for compressibility. In the numerical results we focus on steady-state problems that meet these criteria for simplicity. 

\section{Derivative-informed neural operator} \label{section:dino}

We address parametric derivative learning for high-dimensional maps $\mathbb{R}^{d_M}\ni m \mapsto q(m) \in \mathbb{R}^{d_Q}$, with parameter $m$ obeying probability density $\nu$. Let $H^1_\nu = H^1(\mathbb{R}^{d_M},\nu; \mathbb{R}^{d_Q})$ denote a (discrete) Bochner space with the norm given by
\begin{equation}
    \begin{split}
    \|q\|_{H^1_\nu} &= \sqrt{\mathbb{E}_\nu[\|q\|_2^2 + \|\nabla q\|_F^2]} =  \sqrt{\int_{\mathbb{R}^{d_M}} \left(\|q(m) \|^2_2 + \|\nabla q(m)\|_F^2 \right)d\nu(m)},
    \end{split}
\end{equation}
where $\mathbb{E}_\nu$ is the expectation taken with respect to $\nu$, and $\|q(m)\|_2 = \|q(m)\|_{\ell^2(\mathbb{R}^{d_Q})}$ is the vector Euclidean norm of the output $q$ evaluated at $m$ and $\|\nabla q(m)\|_F = \|\nabla q(m)\|_{F(\mathbb{R}^{d_Q\times d_M})}$ is the matrix Frobenius norm of the Jacobian $\nabla q = \nabla_m q =  \frac{\partial q}{\partial m}$ evaluated at $m$. We refer to the square root of the expectation of this term, $\sqrt{\mathbb{E}_\nu[\|\nabla q\|^2_F]}$ as the $H^1_\nu$ semi-norm. The space $H^1_\nu$ is a subset of $L^2_\nu = L^2(\mathbb{R}^{d_M},\nu; \mathbb{R}^{d_Q})$, including all $L^2_\nu$ functions that have finite $H^1_\nu$ semi-norm. The derivative-informed neural operator $f_w = f(\cdot,w) :\mathbb{R}^{d_M}\times \mathbb{R}^{d_W}\rightarrow \mathbb{R}^{d_Q}$ is a neural network parametrized by weights $w\in \mathbb{R}^{d_W}$, and is constructed by attempting to solve the derivative-informed expected risk minimization problem:
\begin{equation}\label{h1_exp_risk}
   \min_w \frac{1}{2} \, \mathbb{E}_\nu \left[\|q - f_w\|_2^2+ \|\nabla q - \nabla f_w\|_F^2\right].
\end{equation}
This minimization problem attempts to train a neural operator that simultaneously approximates the QoI map as well as its parametric Jacobian on average over parameter space. Due to the inability to directly integrate with respect to $\nu$ in practice, the expected risk minimization problem is approximated via samples of the map and its derivatives \linebreak$\{(m_i,q(m_i),\nabla q(m_i))|m_i \sim \nu\}_{i=1}^N$, which leads to the empirical risk minimization problem:
\begin{equation}\label{h1_emp_risk} 
  \min_w \frac{1}{2N}\sum_{i=1}^N \|q(m_i) - f_w(m_i)\|_2^2+\|\nabla q(m_i) - \nabla f_w(m_i)\|_F^2,
\end{equation}
which is solved using gradient-based methods, by differentiating \eqref{h1_emp_risk} with respect to the weights $w$. While it would be ideal to solve the empirical risk minimization problem \eqref{h1_emp_risk} as stated, since it imposes the complete Jacobian information, this becomes computationally infeasible as the dimensions $d_Q,d_M$ grow. This is due to the costs involved in both constructing $\nabla q$ training data, as well as the memory and computational costs associated with evaluating and differentiating with respect to $w$ the $d_Q\times d_M$ Jacobian approximation error term, $\|\nabla q(m_i) - \nabla f_w(m_i)\|_F^2$. In the former, ostensibly $\min\{d_M,d_Q\}$ linearized forward or adjoint solves are required at each training data point to build the Jacobian. In the training phase, computational graphs must allocate $O(N_\text{batch}d_Qd_M)$ memory for batch computation of Jacobian approximation errors, as well as $3-6X$ additional storage used in automatic differentiation \cite{baydin2015automatic}.

In this work we propose efficient methods that can reduce the computation costs associated with both Jacobian training data generation as well as the neural operator training by making use of powerful dimension reduction strategies. High-dimensional parametric maps of interest often locally admit compact representations; they are informed in only low-dimensional subspaces of the inputs and outputs. In such cases derivative information can also be shown to be low rank (i.e. the derivative admits a compact representation). This happens because of the smoothness of the map, and correlation in both the input parameters and the output QoIs. In this case, local linearizations of the map can be efficiently approximated by low rank matrices with rank $r \ll d_Q, d_M$, when $d_Q$ is high-dimensional (e.g. representing a continuum field) or $r = O(d_Q) \ll d_M$, when $d_Q$ is finite dimensional and small. This low-rank property of the Jacobian matrix has been observed and used in inverse problems across many applications
\cite{BeskosGirolamiLanEtAl17,BrennanBigoniZahmEtAl20,Bui-ThanhBursteddeGhattasEtAl12,
Bui-ThanhGhattasMartinEtAl13,Bui-ThanhGhattas14,ChenGhattas21,ChenVillaGhattas17,ChenWuChenEtAl19,CuiLawMarzouk16,FlathWilcoxAkcelikEtAl11,IsaacPetraStadlerEtAl15,ZahmCuiLawEtAl18}. Due to this low-rank property, Jacobian training data can be computed at the cost of $O(r)$ linearized forward or adjoint solves at each sample point, e.g., using randomized SVD \cite{HalkoMartinssonTropp11}, and the online computational and memory costs of training can be made to scale with $r$ instead of the discretization dimensions $d_M,d_Q$ (per sample). 

\subsection{Scalable computation of Jacobian training data} \label{section:scalable_computation}

After samples of $m, u(m)$ are computed, Jacobian training data can be computed matrix-free via differentiation of $q(m) = q(u(m))$ with respect to $m$, implicitly accounting for the state equations $R(u(m),m) = 0$:
\begin{equation} \label{eq:implicit_jacobian}
  \nabla q(u(m)) = -\frac{dq}{du}\left[\frac{\partial R}{\partial u}\right]^{-1}\frac{\partial R}{\partial m}.
\end{equation}
When the Jacobian rank $r$ is small, $\nabla q$ can be efficiently approximated using randomized SVD \cite{HalkoMartinssonTropp11}, where the Jacobian is accessed via its action on only $O(r)$ random vectors. The major computational costs are due to the matrix inverse $\left[\frac{\partial R}{\partial u}\right]^{-1}$, which can be applied efficiently to random vectors from the right or left by solving linearized forward or adjoint PDEs, respectively. In many PDE problems, $\frac{\partial R}{\partial u}$ is sparse and can be efficiently factored using sparse direct solvers, which can amortize the factorization cost by simultaneous application to many right-hand sides, as is done in matrix-free methods to compute the truncated SVD of the Jacobian matrix-free. When the forward model is linear, the matrix $\left[\frac{\partial R}{\partial u}\right]^{-1}$ is already factorized from the state solution. When the forward model is nonlinear, factorizing and applying $\left[\frac{\partial R}{\partial u}\right]^{-1}$ are typically inexpensive relative to the costs of the state solution $m \mapsto u(m)$, and may be readily factorized and available from the last Newton iteration used in the solution of the nonlinear forward model. 

For very high-dimensional problems where sparse direct solvers cannot be used, one can further amortize the costs of preconditioners used to accelerate the solution of the forward model. In general, Jacobian matrices that are of low rank, or compressible in some other way (e.g., as hierarchical matrices) should not be formed explicitly as dense matrices, and can be efficiently compressed using matrix-free methods that require few PDE solves. For highly nonlinear problems (measured in number of nonlinear Newton iterations required for their solution), the marginal costs of Jacobian training data generation can be made small relative to the solution of the nonlinear forward model when the Jacobian is compressible. 



See Appendix \ref{section:train_data_generation} for pseudo code explaining the derivative training data generation process.

\subsection{Approximation of matrix Frobenius norm via truncated SVD} \label{section:jacobian_svd}

At each parameter sample we can decompose a Jacobian with rank $r$ into its truncated SVD:
\begin{subequations}
\begin{equation}\label{eq:qSVD}
    \nabla q \approx U_r \Sigma_r V_r^T, \qquad \|\nabla q - U_r\Sigma_r V_r^T\|_F^2 = \sum_{i=r+1}^{\min\{d_Q,d_M\}} \sigma_i^2.
\end{equation}

The rank $r$ is chosen such that truncated singular values $\sigma_i$ are small or zero for $i >r$. When the truncated singular values are exactly zero the truncated SVD coincides with the reduced SVD. All of the information about the Jacobian is contained in the reduced SVD, since the left and right nullspaces of the Jacobian coincide with the orthogonal complements to the dominant left and right singular vectors ($U_r^\perp$ and $V_r^\perp$, respectively)
\begin{align}
\text{Left nullspace: } \text{span}(U_r^\perp) &= \text{span}(I_{d_Q} - U_rU_r^T), \\
\text{Right nullspace: } \text{span}(V_r^\perp) &= \text{span}(I_{d_M} - V_rV_r^T).
\end{align}
\end{subequations}
In the case of the truncated SVD the orthogonal complements $U_r^\perp$ and $V_r^\perp$ do not exactly coincide with the left and right nullspaces, but the map is locally uninformed in these modes since the singular values are small, in this case we refer to these as the uninformed subspaces.

A first idea we consider is to reduce the complexity of the Jacobian approximation error computation at a sample point using the truncated SVD. In this approach, the goal is to match the derivatives of the neural operator to the derivatives of $q(m)$ in the dominant subspaces defined by the left and right singular vectors of the truncated SVD at each parameter sample; this aims improve the parametric map approximation by enforcing smooth interpolation of the map between training data. Additionally, this approach can be used to achieve accurate approximations of the full Jacobian, if it is used in tandem with an efficient approach to penalize the singular values in the uninformed complementary subspaces of the truncated SVD. This penalization is difficult to achieve in general as the complementary subspace conditions are formally high-dimensional unlike the truncated Jacobian approximation error. We start with a proposition for the decomposition of the Jacobian error into low-dimensional conditions in informed subspaces and conditions for the complementary subspaces.

\begin{proposition}{Controlling the Jacobian approximation error with truncated SVD} \label{prop:decompose_h1_seminorm}. 
At every parameter sample the following error bound holds:
\begin{align}
    \|\nabla q - \nabla f_w\|^2_{F(\mathbb{R}^{d_Q \times d_M})} \leq  &\|\Sigma_r - U_r^T\nabla f_wV_r\|^2_{F(\mathbb{R}^{r \times r})} \nonumber + \sum_{i=r+1}^{\min\{d_Q,d_M\}}\sigma_i^2 \\
    +&\| (I_{d_Q} - U_rU_r^T)\nabla f_w (I_{d_M}- V_rV_r^T)\|^2_{F(\mathbb{R}^{d_Q \times d_M})} \nonumber \\
     +&\|(I_{d_Q} - U_rU_r^T)\nabla f_wV_r\|^2_{F(\mathbb{R}^{d_Q \times r})}\nonumber\\
     +&\|U_r^T\nabla f_w (I_{d_M}- V_rV_r^T)\|^2_{F(\mathbb{R}^{r \times d_M})}.
\end{align}

\end{proposition}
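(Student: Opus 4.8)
The plan is to decompose the Jacobian error along the orthogonal projectors onto the informed and uninformed left and right singular subspaces, use that the resulting blocks are mutually orthogonal in the Frobenius inner product (a Pythagorean identity), and finish with a triangle inequality on the single block that mixes the truncated Jacobian tail with the surrogate.

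First I would insert the resolutions of the identity $I_{d_Q} = U_rU_r^T + (I_{d_Q}-U_rU_r^T)$ on the left and $I_{d_M} = V_rV_r^T + (I_{d_M}-V_rV_r^T)$ on the right of $\nabla q - \nabla f_w$. This writes $\nabla q - \nabla f_w$ as a sum of the four blocks $P_L(\nabla q - \nabla f_w)P_R$ with $P_L\in\{U_rU_r^T,\,I_{d_Q}-U_rU_r^T\}$ and $P_R\in\{V_rV_r^T,\,I_{d_M}-V_rV_r^T\}$. Since the column space of $U_rU_r^T$ is orthogonal to that of $I_{d_Q}-U_rU_r^T$, and similarly on the right, these four blocks are pairwise orthogonal with respect to $\langle A,B\rangle = \mathrm{tr}(A^TB)$, so $\|\nabla q - \nabla f_w\|_F^2$ equals the sum of their four squared Frobenius norms.

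Next I would evaluate each block using the full-SVD relations $U_r^T\nabla q\,V_r = \Sigma_r$, $U_r^T\nabla q\,(I_{d_M}-V_rV_r^T) = 0$, $(I_{d_Q}-U_rU_r^T)\nabla q\,V_r = 0$, and $(I_{d_Q}-U_rU_r^T)\nabla q\,(I_{d_M}-V_rV_r^T) = \nabla q - U_r\Sigma_rV_r^T$, which follow by expanding $\nabla q$ in its full SVD and using orthonormality of the singular vectors; the last quantity has squared Frobenius norm $\sum_{i=r+1}^{\min\{d_Q,d_M\}}\sigma_i^2$. Using in addition the unitary invariance of the Frobenius norm to strip the orthonormal factors $U_r$ and $V_r$, the $(U_rU_r^T,V_rV_r^T)$ block equals $\|\Sigma_r - U_r^T\nabla f_w V_r\|_F^2$, the $(U_rU_r^T,I_{d_M}-V_rV_r^T)$ block equals $\|U_r^T\nabla f_w(I_{d_M}-V_rV_r^T)\|_F^2$, and the $(I_{d_Q}-U_rU_r^T,V_rV_r^T)$ block equals $\|(I_{d_Q}-U_rU_r^T)\nabla f_w V_r\|_F^2$. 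These reproduce three of the five terms on the right-hand side exactly.

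The remaining block is the uninformed--uninformed one, $(I_{d_Q}-U_rU_r^T)(\nabla q - \nabla f_w)(I_{d_M}-V_rV_r^T) = (\nabla q - U_r\Sigma_rV_r^T) - (I_{d_Q}-U_rU_r^T)\nabla f_w(I_{d_M}-V_rV_r^T)$, where the truncated tail of $\nabla q$ and the surrogate's component in the complementary subspaces both live. I would bound its squared Frobenius norm by $\sum_{i=r+1}^{\min\{d_Q,d_M\}}\sigma_i^2 + \|(I_{d_Q}-U_rU_r^T)\nabla f_w(I_{d_M}-V_rV_r^T)\|_F^2$ via the triangle inequality; summing with the three exact contributions gives the claimed bound. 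The argument is entirely elementary linear algebra, and the only point requiring care is the bookkeeping: recognizing that the Jacobian tail $\nabla q - U_r\Sigma_rV_r^T$ and the complementary-subspace component of $\nabla f_w$ occupy the same subspace and must be grouped into one block, which is precisely the step at which the otherwise-exact block decomposition becomes an inequality.
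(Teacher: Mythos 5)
Your proposal is correct and follows essentially the same route as the paper's own proof: the same four-block Pythagorean decomposition of $\|\nabla q - \nabla f_w\|_F^2$ via the projectors $U_rU_r^T$ and $V_rV_r^T$, the same identities that annihilate the mixed $\nabla q$ blocks and reduce the informed block to $\|\Sigma_r - U_r^T\nabla f_w V_r\|_F^2$, and the same final ``triangle inequality'' on the uninformed--uninformed block. The one remark worth making is a looseness you share with the paper: bounding $\|a-b\|_F^2$ by $\|a\|_F^2+\|b\|_F^2$ is not literally the triangle inequality (it requires either a factor of $2$ or nonnegativity of the cross term $\mathrm{tr}(a^Tb)$, which need not hold since the Jacobian tail and the surrogate's complementary-subspace component are not orthogonal), so strictly speaking both arguments establish the stated bound only up to that constant.
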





The proof is in Appendix \ref{section:proof_of_h1_seminorm}. This proposition demonstrates that the dominant Jacobian information represented by the truncated SVD can be efficiently encoded into neural operator regression for only $O(r^2 + rd_Q + rd_M)$, by directly penalizing the square of the truncated Jacobian approximation error $\|\Sigma_r - U_r^T\nabla f_wV_r\|_{F(\mathbb{R}^{r \times r})}$, but it comes with caveats. First if the Jacobian is over-truncated, the trailing sum of the squared singular values will lead to errors in approximating the true Jacobian approximation error. Additionally this bound shows that in order to control the Jacobian approximation error, one needs to somehow penalize the norm of the neural operator Jacobian in the uninformed subspaces that are orthogonal to the truncated SVD subspaces. Penalizing these terms formally scales with the product of the dimensions $d_Q\times d_M$, and is likely infeasible in many settings. As will be shown in numerical results, this truncated Jacobian error minimization still has positive benefits: it can substantially improve the function approximation, and can lead to accurate Jacobian computations in the dominant subspaces captured by the truncated SVD.

When $r$ is large, computing the Frobenius norm in $\mathbb{R}^{r\times r}$ may still be untenable. This is because automatic differentiation requires significant memory and operational overhead, while stochastic optimization requires that additional memory be allocated for additional arrays used in evaluating finite sum gradients and Hessians used in the training of the neural operator. We propose to further ease this computational burden by randomly subsampling left and right singular vectors at each epoch of training. In the following proposition we discuss the approximation of the Frobenius norm by row and column subsampling.

\begin{proposition}{Approximation of submatrix truncated SVD error norm in expectation}\label{prop:submatrix_svd_convergence}

Let $\kappa$ be a distribution of $k$ indices subsampled from $1,\dots r$ with uniform probability. Denote samples of these indices by $[\widehat{k}],[\widetilde{k}]$, and the subsampled matrices by $U_{[\widehat{k}]}\in\mathbb{R}^{d_Q \times k },V_{[\widetilde{k}]} \in \mathbb{R}^{d_M \times k}$. When $k$ columns of $U_r$ and $V_r$ are subsampled independently then

\begin{equation}
    \mathbb{E}_{[\widehat{k}],[\widetilde{k}] \sim \kappa} \left[\|\Sigma_{[\widehat{k}],[\widetilde{k}]}  - U_{[\widehat{k}]}^T\nabla f_w V_{[\widetilde{k}]}\|^2_{F(\mathbb{R}^{k \times k})}\right] = \frac{k^2}{r^2}\|\Sigma_r - U_r^T\nabla f_w V_r\|^2_{F(\mathbb{R}^{r \times r})}.
\end{equation}
Here $\Sigma_{[\widehat{k}],[\widetilde{k}]}  = U_{[\widehat{k}]}^TU_r\Sigma_r V_r^TV_{[\widetilde{k}]}$. When the same $k$ columns are subsampled for both $U_r$ and $V_r$, then
\begin{align}
    \mathbb{E}_{[\widehat{k}] \sim \kappa} \left[\|\Sigma_{[\widehat{k}],[\widehat{k}]}  - U_{[\widehat{k}]}^T\nabla f_w V_{[\widehat{k}]}\|^2_{F(\mathbb{R}^{k \times k})}\right] = \frac{k}{r}\|\text{diag}(\Sigma_r - U_r^T\nabla_m f_w V_r)\|^2_{F(\mathbb{R}^{r \times r})}\nonumber \\
     + \frac{k(k-1)}{r(r-1)}\|\text{offdiag}(\Sigma_r - U_r^T\nabla f_w V_r)\|^2_{F(\mathbb{R}^{r \times r})}.\label{eq:dependent_samples}
\end{align}
\end{proposition}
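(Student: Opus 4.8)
The plan is to compute both expectations directly by expanding the squared Frobenius norm entrywise and using linearity of expectation. Write $E := \Sigma_r - U_r^T \nabla f_w V_r \in \mathbb{R}^{r \times r}$, so that the quantity we are averaging is $\|E_{[\widehat{k}],[\widetilde{k}]}\|_F^2$, where $E_{[\widehat{k}],[\widetilde{k}]}$ denotes the $k \times k$ submatrix of $E$ obtained by keeping rows indexed by $[\widehat{k}]$ and columns indexed by $[\widetilde{k}]$. The key observation is that $\Sigma_{[\widehat{k}],[\widetilde{k}]} = U_{[\widehat{k}]}^T U_r \Sigma_r V_r^T V_{[\widetilde{k}]}$ is exactly the corresponding submatrix of $\Sigma_r$ (since $U_{[\widehat{k}]}^T U_r$ is the row-selection matrix and $V_r^T V_{[\widetilde{k}]}$ the column-selection matrix, by orthonormality of the columns of $U_r,V_r$), and likewise $U_{[\widehat{k}]}^T \nabla f_w V_{[\widetilde{k}]}$ is the submatrix of $U_r^T \nabla f_w V_r$; hence $\|E_{[\widehat{k}],[\widetilde{k}]}\|_F^2 = \sum_{a \in [\widehat{k}]} \sum_{b \in [\widetilde{k}]} E_{ab}^2$. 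Taking expectations, $\mathbb{E}\big[\|E_{[\widehat{k}],[\widetilde{k}]}\|_F^2\big] = \sum_{a=1}^r \sum_{b=1}^r E_{ab}^2 \, \Pr[a \in [\widehat{k}],\, b \in [\widetilde{k}]]$.

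For the independent case, $[\widehat{k}]$ and $[\widetilde{k}]$ are drawn independently, each a uniformly random $k$-subset of $\{1,\dots,r\}$, so $\Pr[a \in [\widehat{k}]] = \Pr[b \in [\widetilde{k}]] = k/r$ and by independence $\Pr[a \in [\widehat{k}],\, b \in [\widetilde{k}]] = k^2/r^2$ for every pair $(a,b)$. This immediately gives $\mathbb{E}\big[\|E_{[\widehat{k}],[\widehat{k}]}\|_F^2\big] = (k^2/r^2)\sum_{a,b} E_{ab}^2 = (k^2/r^2)\|E\|_F^2$, which is the first claim.

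For the dependent case, both subsampled index sets equal the same uniformly random $k$-subset $[\widehat{k}]$, so the pair probability splits by whether $a=b$ or $a\neq b$: for diagonal entries $\Pr[a \in [\widehat{k}]] = k/r$, while for off-diagonal entries $\Pr[a \in [\widehat{k}] \text{ and } b \in [\widehat{k}]] = \binom{r-2}{k-2}/\binom{r}{k} = \frac{k(k-1)}{r(r-1)}$. Splitting the double sum into diagonal and off-diagonal parts then yields $\mathbb{E}\big[\|E_{[\widehat{k}],[\widehat{k}]}\|_F^2\big] = \frac{k}{r}\sum_{a} E_{aa}^2 + \frac{k(k-1)}{r(r-1)}\sum_{a \neq b} E_{ab}^2 = \frac{k}{r}\|\mathrm{diag}(E)\|_F^2 + \frac{k(k-1)}{r(r-1)}\|\mathrm{offdiag}(E)\|_F^2$, which is \eqref{eq:dependent_samples}.

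The only genuinely nontrivial step is the reduction of the submatrix norm to an entrywise sum over $E$ — i.e., verifying that subsampling columns of $U_r$ and $V_r$ and forming $U_{[\widehat{k}]}^T \nabla f_w V_{[\widetilde{k}]}$ together with the stated $\Sigma_{[\widehat{k}],[\widetilde{k}]}$ genuinely produces the $[\widehat{k}] \times [\widetilde{k}]$ submatrix of $E$, rather than some projection that mixes entries. This hinges on $U_r^T U_r = I_r$ and $V_r^T V_r = I_r$, so $U_{[\widehat{k}]}^T U_r = S_{[\widehat{k}]}$ and $V_r^T V_{[\widetilde{k}]} = S_{[\widetilde{k}]}^T$ where $S_{[\widehat{k}]}$ is the $k \times r$ 0/1 selection matrix; everything else is bookkeeping with indicator probabilities for uniform $k$-subsets, which is routine. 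I would state the selection-matrix identity as a short lemma or inline remark, then dispatch both cases with the entrywise expansion above.
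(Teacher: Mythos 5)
Your proof is correct and follows essentially the same route as the paper's: the paper writes $B = U_r\Sigma_r V_r^T - \nabla f_w$ and expands $\|U_{[\widehat{k}]}^T B V_{[\widetilde{k}]}\|_F^2$ entrywise with indicator functions $\chi_{i\in[\widehat{k}]}$, which is exactly your entrywise expansion of the submatrix of $E = U_r^T B V_r$ with inclusion probabilities $k/r$, $k^2/r^2$, and $\tfrac{k(k-1)}{r(r-1)}$. The only cosmetic difference is that you make the selection-matrix identity explicit while the paper leaves it implicit in the indicator expansion.
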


The proof is in Appendix \ref{section:proof_of_subsampled_svd}. This proposition establishes that using either independent or dependent samples for rows and columns, the subsampled approximation of the Frobenius norm of the truncated Jacobian approximation error serves as an asymptotically equivalent approximation of the truncated Jacobian approximation error, which can be scalably incorporated into training independent of how large $r$ is. In the case that the same samples are used for both the rows and columns, the diagonals of the matrix are over-emphasized. Since $\Sigma_r$ is by definition a diagonal matrix, this can be a beneficial property since it guarantees that there will always be $k$ nonzero slope conditions that are imposed at each sample. In the case that independent samples are used it is possible that only off diagonal elements of $\Sigma_r$ will show up in the loss function, which in our experience can make the optimization problem harder. Proposition \ref{prop:submatrix_svd_convergence} demonstrates the truncated Jacobian approximation error can be computed in a memory efficient way, even when $r$ is large. 

The truncated Jacobian approximation error is generic and can be included in neural operator training in a matrix-free way. The low-dimensional information can be extracted from generic differentiable neural operators $f_w(m)$ via use of automatic differentiation; a schematic for this is shown in Figure \ref{derivative_schematic}.

Numerical results in Section \ref{section:numerical_results} demonstrate that the inclusion of the truncated (i.e. dominant) Jacobian information, particularly with matrix subsampling, improves the parametric map approximation in the $L^2_\nu$ sense. Similar to Hermite interpolation methods, we expect that the inclusion of dominant Jacobian information at training data points leads to more accurate interpolation between training data points in the directions that the parametric map is changing the most. This makes the strategy desirable in contexts where one cares most about more accurate parametric map approximations. 

Unfortunately, only including the truncated Jacobian information in the neural operator training does not guarantee accurate Jacobian approximations, since it does not directly address the orthogonal complementary subspaces, which are naturally high-dimensional. Instead, in what follows we show that by using reduced basis neural operator architectures, one can exploit low dimensionality of the Jacobian information between the input and output bases. We will show that these neural operators naturally reduce the dimensionality of derivative learning, allowing one to efficiently learn both the dominant Jacobian information and the complimentary nullspace conditions, independent of the discretization dimensions $d_Q,d_M$

\subsection{Reduced Basis Derivative Learning} \label{section:arch_constraints}

Consider the reduced basis neural network
\begin{equation}\label{ridge_function}
    f_w^{\bar{r}}(m) = \Phi_{\bar{r}_Q }\phi_{\bar{r}}(\Psi_{\bar{r}_M}^Tm,w) + b,
\end{equation}
where $\phi_{\bar{r}}$ parametrizes a nonlinear operator between the two reduced subspaces with weights $w$, where $\Phi_{\bar{r}_Q} \in \mathbb{R}^{d_Q\times \bar{r}_Q}$ and $\Psi_{\bar{r}_M} \in \mathbb{R}^{d_M \times \bar{r}_M} $ are orthonormal reduced bases for the inputs and outputs, respectively, for the map $m \mapsto q$ and its Jacobian\footnote{We adopt the notation $\Phi_{\bar{r}_Q} $ and $\Psi_{\bar{r}_M}$  in order to distinguish these global bases with global basis ranks $\bar{r}_Q$ and $\bar{r}_M$ from the local bases defined by the truncated SVD with rank $r$ discussed in earlier setions.}. The vector $b$ represents an affine shift in the output space, and can be learned in conjunction with the weights $w$, or can be supplied as e.g., the mean of the output training data. Examples of these reduced basis architectures include PCANet \cite{BhattacharyaHosseiniKovachkiEtAl2021} where the bases for the inputs and outputs are computed using principal component analysis (PCA),derivative-informed projected architectures \cite{OLearyRoseberryDuChaudhuriEtAl2021,OLeary-RoseberryVillaChenEtAl22} where derivative-informed bases are used, and POD-DeepONet \cite{lu2022comprehensive} which applies ideas related to PCANet to the DeepONet architecture. A property of these functions is that their Jacobian with respect to $m$ is structurally zero in the orthogonal complements of the input and output reduced bases, as we show next.

\begin{proposition}{Orthogonality Conditions for Reduced Basis Neural Operator Jacobian.}\label{prop:jacobian_ridge_orthogonality}
Without loss of generality take $\bar{r}=\bar{r}_Q = \bar{r}_M$.
\begin{enumerate}
\item If $X \perp \Psi_{\bar{r}}$, then $\nabla f^{\bar{r}}_w X = 0$.

\item If $Y \perp \Phi_{\bar{r}}$, then $Y^T \nabla f^{\bar{r}}_w = 0$.
\end{enumerate}
\end{proposition}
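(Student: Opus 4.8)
The plan is to reduce everything to a single application of the chain rule to the composite structure of \eqref{ridge_function}, after which both claims fall out of the orthogonality of $X$ (resp.\ $Y$) to the reduced bases. First I would record the Jacobian of $f_w^{\bar r}$ with respect to $m$. Writing $z = z(m) = \Psi_{\bar r}^T m \in \mathbb{R}^{\bar r}$ for the reduced input coordinates and $J_\phi(m) = \tfrac{\partial \phi_{\bar r}}{\partial z}\big|_{z(m),w} \in \mathbb{R}^{\bar r \times \bar r}$ for the Jacobian of the inner nonlinear map in its first argument, the chain rule applied to $f_w^{\bar r}(m) = \Phi_{\bar r}\,\phi_{\bar r}(\Psi_{\bar r}^T m, w) + b$ (the additive shift $b$ being constant in $m$, and $\Phi_{\bar r}$, $\Psi_{\bar r}$ being fixed matrices) gives the factored form
\begin{equation}
  \nabla f_w^{\bar r}(m) \;=\; \Phi_{\bar r}\, J_\phi(m)\, \Psi_{\bar r}^T \;\in\; \mathbb{R}^{d_Q \times d_M}.
\end{equation}
This is the single structural identity on which both parts rest; I would state it as a short lemma or inline step.

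For part (1), I interpret ``$X \perp \Psi_{\bar r}$'' in the natural way: the columns of $X$ lie in the orthogonal complement of the column space of $\Psi_{\bar r}$, i.e.\ $\Psi_{\bar r}^T X = 0$. Then right-multiplying the factored Jacobian by $X$ gives $\nabla f_w^{\bar r}\, X = \Phi_{\bar r}\, J_\phi\, (\Psi_{\bar r}^T X) = \Phi_{\bar r}\, J_\phi\, 0 = 0$, regardless of the value of $J_\phi$. For part (2), ``$Y \perp \Phi_{\bar r}$'' means $\Phi_{\bar r}^T Y = 0$, equivalently $Y^T \Phi_{\bar r} = 0$; left-multiplying by $Y^T$ gives $Y^T \nabla f_w^{\bar r} = (Y^T \Phi_{\bar r})\, J_\phi\, \Psi_{\bar r}^T = 0$. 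Taking $X$ and $Y$ to be the orthogonal-complement bases $\Psi_{\bar r}^\perp$ and $\Phi_{\bar r}^\perp$ recovers the statement that the Jacobian is structurally zero in the uninformed input/output directions, which is the use made of this result in Theorem \ref{theorem:dim_independent_dino}.

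There is no real obstacle here beyond bookkeeping: the result is a direct consequence of the fixed linear encoder/decoder sandwiching the trainable nonlinearity. The only point requiring a word of care is fixing the meaning of the symbol ``$\perp$'' (orthogonality of subspaces / vanishing of $\Psi_{\bar r}^T X$ and $\Phi_{\bar r}^T Y$) and noting that the argument is completely agnostic to the form of $\phi_{\bar r}$ and to the point $m$ at which the Jacobian is evaluated, so the conclusion holds pointwise over all of $\mathbb{R}^{d_M}$ and for every weight vector $w$. If one wants to be fully explicit about the mixed ranks $\bar r_Q \neq \bar r_M$, the identical computation applies with $\Phi_{\bar r_Q}$ and $\Psi_{\bar r_M}$ in place of $\Phi_{\bar r}$ and $\Psi_{\bar r}$ and $J_\phi \in \mathbb{R}^{\bar r_Q \times \bar r_M}$; the ``without loss of generality'' reduction to $\bar r_Q = \bar r_M$ is purely notational.
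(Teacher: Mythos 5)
Your proposal is correct and follows essentially the same route as the paper: apply the chain rule to obtain the factored Jacobian $\nabla f_w^{\bar r} = \Phi_{\bar r}\,\nabla_z\phi_{\bar r}\,\Psi_{\bar r}^T$, then observe that $\Psi_{\bar r}^T X = 0$ and $Y^T\Phi_{\bar r} = 0$ annihilate the product from the right and left, respectively. The additional remarks on interpreting $\perp$ and on the pointwise/weight-independent nature of the conclusion are sound but do not change the argument.
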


\begin{proof}
By the chain rule we have
\begin{equation}
    \nabla f^{\bar{r}}_w = \Phi_{\bar{r}} \nabla_z\phi_{\bar{r}}(z,w)|_{z = \Psi_{\bar{r}}^Tm}\Psi_{\bar{r}}^T.
\end{equation}
Since $\Psi_{\bar{r}}^TX = 0$ we get that $\nabla f^{\bar{r}}_wX = 0$. Since $Y^T\Phi_{\bar{r}} = 0$, we get that $Y^T \nabla f^{\bar{r}}_w = 0$.
\end{proof}

This immediately leads to the following result about the dimension independence of the Jacobian training.

\begin{theorem}{Dimension Independence of Reduced Basis Derivative Learning.}\label{theorem:dim_independent_dino}
Let $m_r = \Psi_{\bar{r}_M}^Tm$. Then we have the equivalence of the following optimization problems:
\begin{equation}
  \min_w \|\Phi_{\bar{r}_Q}^T\nabla q \Psi_{\bar{r}_M}- \nabla_{m_r}\phi_{\bar{r}}\|^2_{F(\mathbb{R}^{\bar{r}_Q\times \bar{r}_M})} \Longleftrightarrow \min_w\|\nabla q  - \nabla f_w^{\bar{r}} \|^2_{F(\mathbb{R}^{d_Q\times d_M})}
\end{equation}
\end{theorem}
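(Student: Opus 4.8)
The plan is to show that the two Frobenius-norm objectives differ by a constant independent of $w$, so that they have the same minimizers. The key structural fact is Proposition \ref{prop:jacobian_ridge_orthogonality}: the Jacobian $\nabla f_w^{\bar{r}}$ lives entirely in the subspace $\mathrm{span}(\Phi_{\bar{r}_Q}) \otimes \mathrm{span}(\Psi_{\bar{r}_M})$, i.e. $\nabla f_w^{\bar{r}} = \Phi_{\bar{r}_Q}\Phi_{\bar{r}_Q}^T \nabla f_w^{\bar{r}} \Psi_{\bar{r}_M}\Psi_{\bar{r}_M}^T$, and moreover from the chain-rule formula in that proof, $\Phi_{\bar{r}_Q}^T \nabla f_w^{\bar{r}} \Psi_{\bar{r}_M} = \nabla_{m_r}\phi_{\bar{r}}$ exactly (using orthonormality $\Phi_{\bar{r}_Q}^T\Phi_{\bar{r}_Q} = I$, $\Psi_{\bar{r}_M}^T\Psi_{\bar{r}_M} = I$).

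First I would introduce the orthogonal projectors $P_Q = \Phi_{\bar{r}_Q}\Phi_{\bar{r}_Q}^T$, $P_M = \Psi_{\bar{r}_M}\Psi_{\bar{r}_M}^T$ and their complements $P_Q^\perp = I_{d_Q} - P_Q$, $P_M^\perp = I_{d_M} - P_M$, and decompose the full Jacobian error using the identity $I = P + P^\perp$ on both sides:
\begin{equation}
\nabla q - \nabla f_w^{\bar{r}} = (P_Q + P_Q^\perp)(\nabla q - \nabla f_w^{\bar{r}})(P_M + P_M^\perp).
\end{equation}
Since $\nabla f_w^{\bar{r}}$ is annihilated by $P_Q^\perp$ on the left and by $P_M^\perp$ on the right (Proposition \ref{prop:jacobian_ridge_orthogonality}), three of the four blocks reduce to projections of $\nabla q$ alone: the $(P_Q^\perp, \cdot)$ blocks equal $P_Q^\perp \nabla q (\cdot)$ and the $(P_Q, P_M^\perp)$ block equals $P_Q \nabla q P_M^\perp$. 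Only the $(P_Q, P_M)$ block retains $w$-dependence, equaling $P_Q(\nabla q - \nabla f_w^{\bar{r}})P_M$. Then, because the four blocks lie in mutually orthogonal subspaces of $\mathbb{R}^{d_Q\times d_M}$ with respect to the Frobenius inner product (row spaces and column spaces are orthogonal), Pythagoras gives
\begin{equation}
\|\nabla q - \nabla f_w^{\bar{r}}\|_F^2 = \|P_Q(\nabla q - \nabla f_w^{\bar{r}})P_M\|_F^2 + C,
\end{equation}
where $C = \|P_Q^\perp \nabla q\|_F^2 + \|P_Q \nabla q P_M^\perp\|_F^2$ is independent of $w$. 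Finally I would use the unitary invariance of the Frobenius norm under the isometries $\Phi_{\bar{r}_Q}, \Psi_{\bar{r}_M}$ to rewrite $\|P_Q(\nabla q - \nabla f_w^{\bar{r}})P_M\|_F^2 = \|\Phi_{\bar{r}_Q}^T \nabla q \Psi_{\bar{r}_M} - \Phi_{\bar{r}_Q}^T \nabla f_w^{\bar{r}} \Psi_{\bar{r}_M}\|_F^2 = \|\Phi_{\bar{r}_Q}^T \nabla q \Psi_{\bar{r}_M} - \nabla_{m_r}\phi_{\bar{r}}\|_F^2$, which is exactly the reduced objective. Since the two objectives differ by the $w$-independent constant $C$, their $\mathrm{argmin}$ sets coincide, establishing the equivalence.

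The main obstacle — though it is more a matter of care than of difficulty — is verifying the block-orthogonality claim that underlies the Pythagorean split: one must check that matrices of the form $P_Q A P_M$, $P_Q^\perp A$, $P_Q A P_M^\perp$ are pairwise orthogonal in the Frobenius inner product, which follows from $\langle X, Y\rangle_F = \mathrm{tr}(X^T Y)$ together with $P_Q P_Q^\perp = 0$ and $P_M P_M^\perp = 0$ and the cyclic property of the trace. A secondary subtlety is being explicit that the equivalence asserted is equivalence of minimizers (equality up to an additive constant) rather than pointwise equality of the objectives; I would state this clearly so the reader does not misread the $\Longleftrightarrow$. I should also note that the argument works verbatim without the simplifying assumption $\bar{r}_Q = \bar{r}_M$, so the reduction to the square case made in Proposition \ref{prop:jacobian_ridge_orthogonality} is not needed here.
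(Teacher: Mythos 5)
Your proposal is correct and follows essentially the same route as the paper's proof: both apply the orthogonal (Pythagorean) decomposition of the Frobenius norm with respect to $\Phi_{\bar{r}_Q}$ and $\Psi_{\bar{r}_M}$, invoke Proposition \ref{prop:jacobian_ridge_orthogonality} to strip the $w$-dependence from the complementary blocks, and finish with $\Phi_{\bar{r}_Q}^T\nabla f_w^{\bar{r}}\Psi_{\bar{r}_M}=\nabla_{m_r}\phi_{\bar{r}}$ by orthonormality. Your version is simply more explicit about the block-orthogonality check and about the ``equal up to a $w$-independent constant'' reading of the equivalence, both of which the paper leaves implicit.
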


\begin{proof}
By applying the orthogonality identity \eqref{orth_decomp_identity} with respect to both $\Phi_{\bar{r}_Q}$ and $\Psi_{\bar{r}_Q}$to $\|\nabla q  - \nabla f_w^{\bar{r}} \|^2$, we can decompose it into $\|\Phi_{{\bar{r}_Q}}^T\nabla q \Psi_{{\bar{r}_M}}- \Phi_{\bar{r}_Q}^T\nabla f_w^{\bar{r}}\Psi_{\bar{r}_M}\|^2_{F(\mathbb{R}^{\bar{r}_Q\times \bar{r}_M)}}$ and terms corresponding to orthogonal complements of $\Phi_{{\bar{r}_Q}}$ and $\Psi_{\bar{r}_M}$. By proposition \ref{prop:jacobian_ridge_orthogonality} all terms involving $\nabla f^{\bar{r}}$ are annihilated, and thus the $w$ gradient of these terms is zero, because these terms are merely affine shifts of the objective function. The result follows from the chain rule and the orthonormality of the two reduced bases:
\begin{align}
  \Phi_{\bar{r}_Q}^T\nabla f^{\bar{r}}\Psi_{\bar{r}_M} = \Phi_{\bar{r}_Q}^T\Phi_{{\bar{r}_Q}}\nabla_{m_r}\phi_{\bar{r}}\Psi_{{\bar{r}_M}}^T\Psi_{{\bar{r}_M}} = \nabla_{m_r}\phi_{\bar{r}}.
\end{align}
\end{proof}

The consequence of this theorem are that the costs of Jacobian training data generation require only $\min\{\bar{r}_Q,\bar{r}_M\}$ linearized forward or adjoint PDE solves per training point, and additionally only the neural network mapping between the reduced subspaces, $\phi_{\bar{r}}$, needs to be built for the training. Thus the online memory and computational costs can be made independent of $d_M$ and $d_Q$ by projecting all training data into the reduced bases. See Figure \ref{rb_dino_schematic} for a schematic of the reduced basis neural operator training.

Proposition \ref{prop:jacobian_ridge_orthogonality} establishes that the reduced basis neural operator has a structurally zero Jacobian in the orthogonal complements of both the reduced input and output bases. This makes clear the significance of the choice of reduced bases; one wants to choose subspaces that resolve the left and right singular vectors of the Jacobian in expectation over parameter space, while tending to annihilate the Jacobian left and right nullspaces. When the forward map and its derivatives can be well-approximated by a reduced basis neural operator, Theorem \ref{theorem:dim_independent_dino} establishes the dimension-independence of reduced basis neural operator derivative training.

To construct a basis for the parameter and QoI spaces, we proceed as follows. The right singular vectors of the Jacobian (weighted by the square of the singular values) are captured by the derivative-informed input basis (i.e., active subspace \cite{ZahmConstantinePrieurEtAl2020}), which is the dominant eigenvectors of
\begin{equation} \label{eq:active_subspace}
\mathbb{E}_\nu[\nabla q^T \nabla q] \in \mathbb{R}^{d_M \times d_M}.
\end{equation}
The derivative-informed input basis is a powerful derivative based dimension reduction technique that resolves the dominant sensitivity information of a parametric map in expectation. It has been useful in dimension reduction techniques and surrogate modeling \cite{ConstantineDowWang2014,OLeary-RoseberryVillaChenEtAl22,OLearyRoseberryDuChaudhuriEtAl2021,ZahmConstantinePrieurEtAl2020}. Similarly, a basis for the QoI space can be constructed via the dominant information contained in the left singular vectors of the Jacobian (again weighted by the square of the singular values) by computing the dominant eigenvectors of:
\begin{equation}\label{eq:derivative_output_basis}
    \mathbb{E}_\nu[\nabla q \nabla q^T] \in \mathbb{R}^{d_Q \times d_Q}.
\end{equation}
We refer to this basis as the derivative-informed output basis. By the use of Poincar\'{e} inequalities, errors in approximating $H^1_\nu$ maps by restricting to the first $r$ eigenvectors of \eqref{eq:active_subspace} can be bounded by the sum of the trailing eigenvalues corresponding to the complementary eigenvectors \cite{ZahmConstantinePrieurEtAl2020}. Recent work has provided error analysis for dimension reduced algorithms used to solve Bayesian inverse problems using bases similar to the derivative-informed output basis \cite{baptista2022gradient}. 

Reduced basis neural networks \eqref{ridge_function} constructed using derivative-informed input and output bases \eqref{eq:active_subspace} and \eqref{eq:derivative_output_basis} are referred to as derivative-informed projected neural networks (DIPNets) \cite{OLearyRoseberryDuChaudhuriEtAl2021,OLeary-RoseberryVillaChenEtAl22}. Given limited training data, DIPNets can achieve high accuracy in approximating parametric functions by focusing on the most sensitive directions in both parameter and data spaces. These architectures are well suited to derivative approximation since they are designed to represent global Jacobian information. The combination of DIPNets with derivative training in the combined reduced bases creates an accurate and scalable approach to approximating parametric maps and derivatives, as demonstrated by numerical results in the next section.

\section{Numerical Experiments} \label{section:numerical_results}

In this section we investigate the effects of including parametric derivative information in neural operator regression for parametric maps with PDE constraints. We compare three different derivative-informed optimization formulations (as proposed in Section \ref{section:dino}), with standard neural operator $L^2_\nu$ training; these formulations are summarized in \eqref{eq:opt_formulations} and Table \ref{table:method_costs}. We consider two different neural network architectures all for differing sizes of the training data set. We assess the accuracy of the trained neural operators in terms of generalization accuracy for the function, and additionally consider various metrics of derivative accuracy, both for the Jacobian itself, and additionally for other derived quantities (gradients and Gauss--Newton Hessians that show up in related inverse problems). In general, our numerical results demonstrate that including derivative information in neural operator training improves the $L^2_\nu$ generalization accuracy of the neural operator, which makes a case for our proposed methods independent of the need for accurate parametric derivatives. Additionally we demonstrate that neural operators trained without derivative information can result in inaccurate computations of high-dimensional derivative quantities (e.g., gradients and Gauss--Newton Hessians with respect to high-dimensional fields), and that the inclusion of derivative information in the training is essential to improve the accuracy of these quantities. The use of derivative-informed reduced basis networks led to both accurate function and derivative approximations, in particular when the amount of training data were limited. We additionally observe that as more training data are available (both function and parametric derivative), over-parametrized networks can yield favorable approximation of both functions and derivatives. The code used to generate these numerical results, and all related details can be found in the \texttt{dino} repository \url{https://github.com/tomoleary/dino} \cite{dino}. Specific instructions about how to generate the training data, train the neural operators, and post-process the trained neural operators, are contained in the \texttt{applications/} directory.

\subsection{Definition of Parametric PDE Maps}

We consider PDE problems where the mapping $m \mapsto q$ represents a mapping from a (high-dimensional) coefficient field parametrizing the PDE to the observables of the PDE state variable at points inside the domain $\Omega$ of the PDE. In all cases we consider unit square domains for two dimensional problems. We use a centered Gaussian distribution $\nu = \mathcal{N}(0,\mathcal{C})$ for the random field with a trace-class Mat\'{e}rn covariance \cite{LindgrenRueLindstrom2011} $\mathcal{C} = \mathcal{A}^{-2}$, where
\begin{align}\label{eq:matern_covariance}
    \mathcal{A} = \begin{cases} (\delta I - \gamma \Delta) \text{ in } \Omega \\
              -\gamma \nabla \cdot n_{\partial\Omega} \text{ on } \partial \Omega \end{cases} 
\end{align}
where $n_{\partial\Omega}$ here is the outward unit normal vector to the domain boundary $\partial \Omega$. The correlation structure is parametrized by $\delta,\gamma> 0$ with the correlation length dictated by the ratio $\sqrt{\gamma/\delta}$. For fixed correlation length, the marginal variance is decreased by making $\gamma$ and $\delta$ larger. We use a uniform mesh of size $64 \times 64 $, and linear finite elements with piecewise linear basis functions for the parameter discretization, leading to the input dimension, $d_M = 4,225$ for all problems. We use a pointwise observation operator on the state at locations $\{x_i \in \Omega\}_{i=1}^{n_\text{obs}}$. This parameter-to-observable map is a critical component of Bayesian inverse problems and Bayesian optimal experimental design problems. In the first two problems, we only observe the state in only half of the domain, in order to imitate partial spatial observations which are typical in Bayesian inverse and optimal experimental design problems. We consider three PDE problems as follows.

\subsubsection{Reaction-Diffusion Problem}

For the first test case, we consider a nonlinear reaction-diffusion problem in $\Omega = (0,1)^2$ with a lognormal diffusion coefficient field $m \sim \mathcal{N}(0,\mathcal{C})$. In this PDE formulation the right hand side source term, $s$, is a sum of $25$ smoothed point sources located on a Cartesian grid centered in $\Omega$, and additionally there is a cubic nonlinearity in the reaction term. We take $\delta = 1.0,\gamma =0.1$. In this problem, $n_\text{obs} = 50$ and the state variable $u$ is scalar valued so $d_Q = 50$, as shown in Figure \ref{fig:rdiff_state}.

\begin{align}
-\nabla \cdot(e^{m}\nabla u) + u^3 &= s \text{ in } \Omega \nonumber\\
u &= 1 \text{ on } \Gamma_\text{top} \nonumber \\
e^m\nabla u \cdot n &= 0 \text{ on } \Gamma_\text{sides} \nonumber\\
u &= 0 \text{ on } \Gamma_\text{bottom} 
\end{align}

\begin{figure}[H]
\center
\includegraphics[width = 0.6\textwidth]{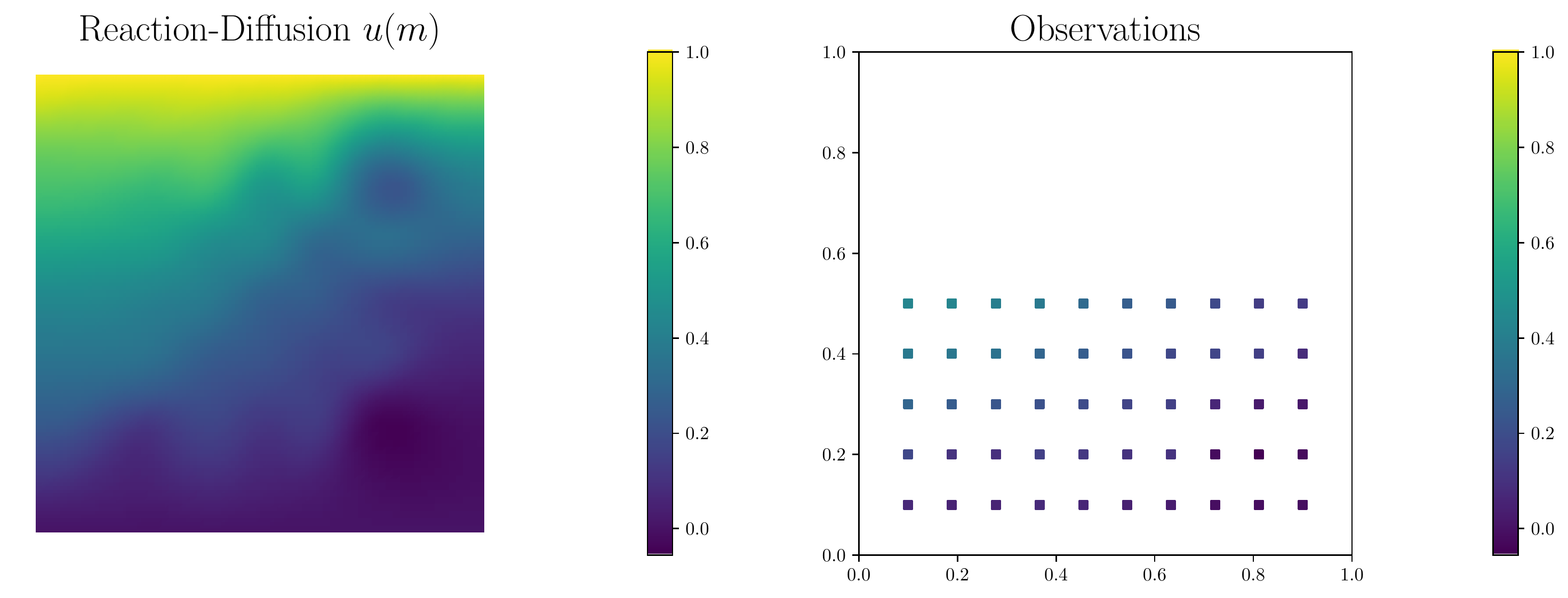}
\caption{An instance of reaction-diffusion state and observables}
\label{fig:rdiff_state}
\end{figure}

\subsubsection{Convection-Reaction-Diffusion Problem}

The second test case is a convection-reaction-diffusion (CRD) problem where the parameter shows up in a nonlinear reaction term \cite{OLeary-RoseberryVillaChenEtAl22,WuOLearyRoseberryChenEtAl23}. The parameters for the distribution are $\delta = 1.0,\gamma = 0.1$. The quantity of interest is again a grid of observables of the PDE solution $u(x_i)$ in the lower half of the domain (the same as in Figure \ref{fig:rdiff_state}). The right hand side of the PDE $s$,  is given by a Gaussian bump centered at $(0.7,0.7)$. The velocity field $v$ is given by a solution to a steady-state Navier-Stokes equation with boundary conditions driving the flow, for more information see the Appendices of \cite{OLeary-RoseberryVillaChenEtAl22}. See Figure \ref{fig:crd_state} for an instance of a parameter and the corresponding state.

\begin{align}
  - \nabla \cdot (k \nabla u) + v \cdot \nabla u  + e^m u^3 &= s \quad \text{in } \Omega \nonumber \\ 
  u &= 0 \text{ on } \partial \Omega \nonumber \\
\end{align}

\begin{figure}[H]
\center
\includegraphics[width = 0.6\textwidth]{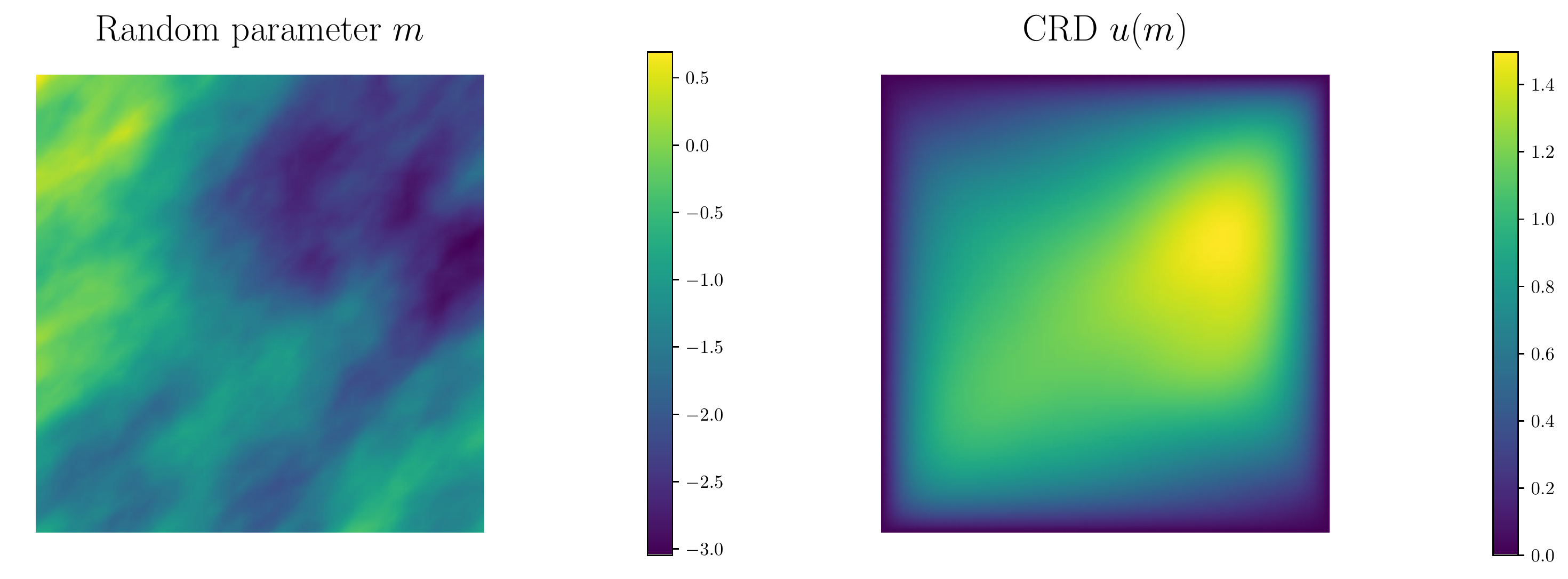}
\caption{An instance of CRD random parameter and corresponding state.}
\label{fig:crd_state}
\end{figure}

\subsubsection{Hyperelasticity Problem}

The third test case is the deformation of a hyperelastic material with spatially varying Young's modulus that is adapted from \cite{CaoOLearyRoseberryJhaEtAl2022} and is relevant to engineering and medical applications \cite{affagard2015identification, goenezen2011solution, gokhale2008solution,   mei2018comparative}. The forward problem is a nonlinear elastic model for the deformation of a neo-Hookean material \cite{GonzalezStuart08} in $\Omega $ under a prescribed physical load. We describe the PDE model here in brief. Let $X$ represent a material point, which is mapped to a spatial point via the displacement $u(x)$: $x = I + u(X)$. The deformation of the hyperelastic material coincides with a change in the internal stress state of the material. This is captured by the strain energy density $W = W(X,C)$, a function of both the material coordinate $X$ as well as the right Cauchy-Green stress tensor $C = F^TF$, which is defined in terms of the deformation gradient $F=I+\nabla u$. For neo-Hookean material \cite{GonzalezStuart08} the strain  energy density function is given by 
\begin{subequations}
\begin{equation}
  W(X,C) = \frac{\mu(X)}{2}(\text{tr}(C) - 3) + \frac{\lambda(X)}{2}\ln(J)^2 - \mu(X)\ln(J),
\end{equation}
where $\text{tr}$ is the trace, and $J = \text{det}(F)$. The Lam\'{e} parameters $\lambda,\mu$ are related to the spatially varying Young's modulus $E$ and the Poisson's ratio $\nu_P$ by
\begin{equation}
  \lambda(X) = \frac{E(X)\nu_P}{(1+\nu_P)(1-2\nu_P)}, \qquad \mu = \frac{E(X)}{2(1+\nu_P)}.
\end{equation}
We assume $\nu_P = 0.4$ is constant, and that the Young's modulus follows a log-normal distribution as follows:
\begin{equation}
  E = \exp(m) + E_0, \qquad m \sim \mathcal{N}(m_0,\mathcal{C}),
\end{equation}
where the paramter $E_0= 1.0$, and $m_0 = 0.37$ is a constant over the domain $\Omega$. The choices of $\gamma$ and $\delta$ in \eqref{eq:matern_covariance} are $\gamma = \frac{4}{3}$ and $\delta = 0.12$. Assuming external forces are applied slowly, and temporal effects in the deformation of the material are negligle, the steady state balance of linear momentum results in the following PDE system.

\begin{align}
  \nabla \cdot (FS) &= 0 \text{ in } \Omega \\
  u &= 0 \text{ on } \Gamma_\text{left} \\
  FS \cdot n &= 0 \text{ on } \Gamma_\text{top} \cup\Gamma_\text{bot} \\
  FS\cdot n &= t \text{ on } \Gamma_\text{right}
\end{align}

\begin{figure}[H]
\center
\includegraphics[width = 0.288\textwidth]{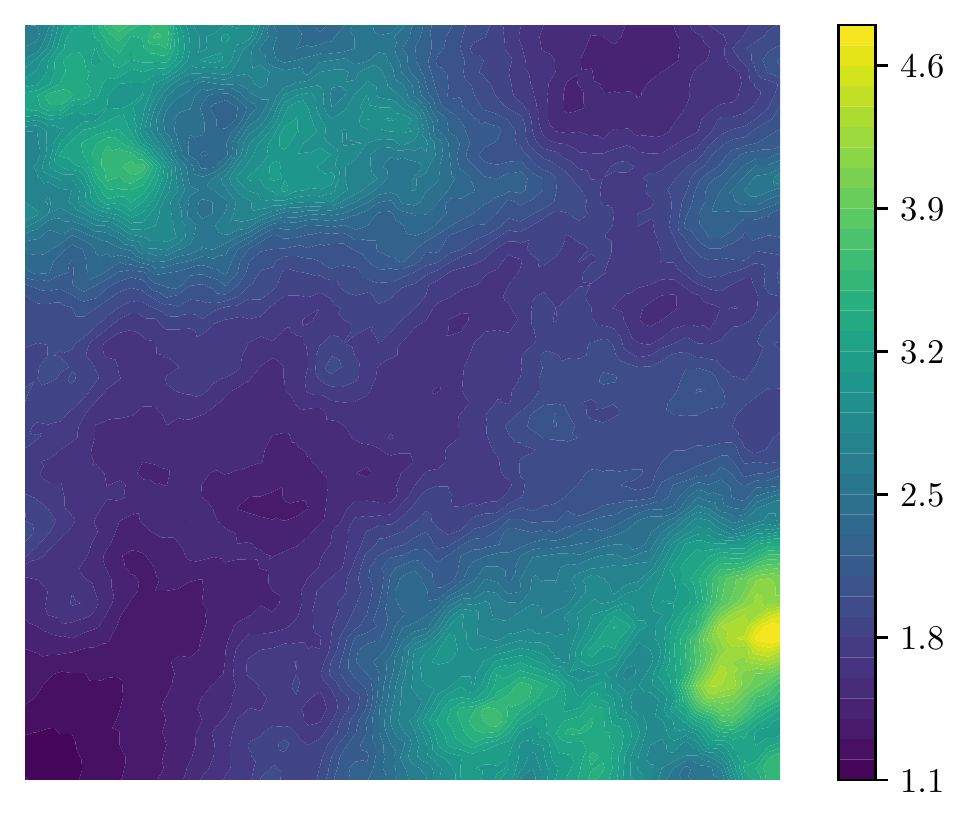}%
\includegraphics[width = 0.312\textwidth]{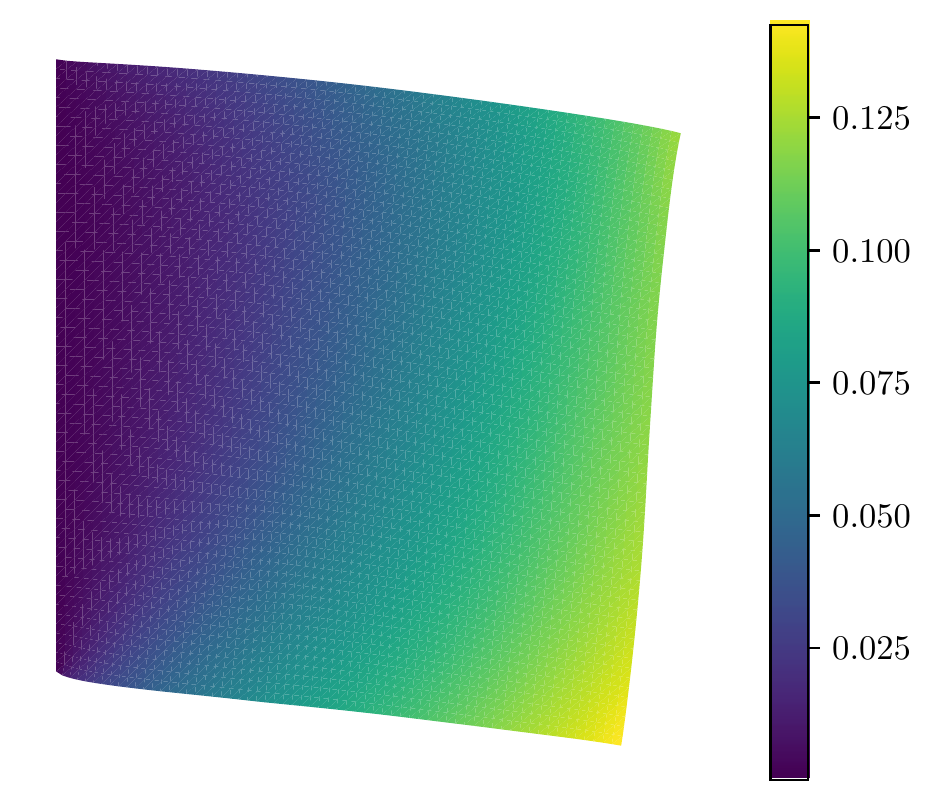}

\caption{An instance of a random elastic modulus (left) and the corresponding displacement state (right).}
\label{fig:hyperelasticity_state}
\end{figure}

$S$ is the second Piola-Kirchhoff stress tensor which represents the stress in the reference configuration, and is given by $S(X,C) = 2 \frac{\partial W(X,C)}{\partial C}$, and the prescribed traction load $t$ is a combination of a Gaussian compressive component and a linear shear state: $t(X) = 0.06\text{exp}\left(-0.25|X_2 - 0.5|^2\right)e_1 + 0.03(1+0.1X_2)e_2$. As with the reaction-diffusion and CRD problems, there are $50$ observations on a grid as in Figure \ref{fig:rdiff_state}, the only differences in this case are that the vertical spacing between observations are doubled so as to span more of the material domain horizontally. Additionally since the state variable is a vector in $\mathbb{R}^{2}$, the observation dimension is doubled to $d_Q = 100$.

\end{subequations}

\subsection{Overview of Architectures and Training}\label{section:training_and_accuracies}

We consider two different networks architectures for each problem, a generic encoder-decoder network (``Generic'') and a derivative-informed projected network (DIPNet) \cite{OLeary-RoseberryVillaChenEtAl22}. The first network captures effects due to over-parametrization (e.g., poor generalization given limited training data), while the latter illustrates the benefits of reduced-basis neural operators as discussed in Section \ref{section:arch_constraints}. For the DIPNet architectures we mainly consider one network that uses a $2\times d_Q$ dimensional reduced basis for the inputs, and a full $d_Q$ dimensional basis for the outputs. For the reaction-diffusion and CRD problems we refer to these as ``DIPNet 100-50'', and for the hyperelasticity ``DIPNet 200-100''. In assessing the accuracies of derivative quantities (gradients and Gauss--Newton Hessians), we additionally consider the effects of reducing the input basis dimension to $d_Q$. The generic encoder-decoder network in each problem uses the latent dimension of $d_Q$. All architectures have six hidden layers, and use softplus activation functions. For the reaction-diffusion and CRD problems, the trainable dimensions are $d_W = 20,400$ for DIPNet 50-50, $d_W = 30,450$ for DIPNet 100-50, and $d_W = 226,600$ for Generic. For the hyperelasticity problem, the trainable dimensions are $d_Q = 90,900$ for the DIPNet 100-100, $d_W = 321,600$ for the DIPNet 200-100, and $d_W = 1,086,400$ for Generic.

\begin{subequations}\label{eq:opt_formulations}
\begin{align}
    &L^2_\nu: \qquad\qquad \min_w \mathbb{E}_\nu\left[\|q - f_w\|^2_{2}\right]\\
    &\text{Full } H^1_\nu: \quad \begin{cases}\begin{aligned} &\min_w \mathbb{E}_\nu\left[\|q - f_w\|^2_{2} + \|\nabla q - \nabla f_w\|^2_{F(\mathbb{R}^{d_Q\times d_M})}\right] \text{, \qquad \qquad \enskip Generic}\\ &\min_w \mathbb{E}_\nu\left[\|q - f_w\|^2_{2} + \|\Phi_{\bar{r}_Q}^T(\nabla q - \nabla f_w)\Psi_{\bar{r}_M}\|^2_{F(\mathbb{R}^{\bar{r}_Q\times \bar{r}_M})}\right] \text{, DIPNet (see Theorem \ref{theorem:dim_independent_dino})}\end{aligned}\end{cases}
\end{align}
\begin{align}
    &\text{Truncated } H^1_\nu: \quad \qquad \min_w \mathbb{E}_\nu\left[\|q - f_w\|^2_{2} + \|U_r^T(\nabla q - \nabla f_w)V_r\|^2_{F(\mathbb{R}^{r\times r})}\right]\text{ (see Proposition \ref{prop:decompose_h1_seminorm})}\\
    &\text{Truncated } H^1_\nu \text{ MS}: \quad \min_w \mathbb{E}_\nu\left[\|q - f_w\|^2_{2} + \mathbb{E}_{[\widehat{k}] \sim \kappa}\left[\|U_{[\widehat{k}]}^T(\nabla q - \nabla f_w)V_{[\widehat{k}]}\|^2_{F(\mathbb{R}^{k\times k})}\right]\right] \text{ (see Proposition \ref{prop:submatrix_svd_convergence})}
\end{align}
\end{subequations}

For each network we consider four different formulations of the loss function (see equations \eqref{eq:opt_formulations}). The first is the typical $L^2_\nu$ training where no derivative information is included. The second is full $H^1_\nu$ parametric regression where the entire $H^1_\nu$ semi-norm loss term is trained with equal weighting to the $L^2_\nu$ loss (note that when training a reduced basis (RB) network (e.g., DIPNet) the online costs of evaluating the $H^1_\nu$ semi-norm are reduced to $O(\bar{r}_Q\bar{r}_M)$ due to Theorem \ref{theorem:dim_independent_dino}). The third formulation is the truncated $H^1_\nu$ parametric regression problem where the entire $H^1_\nu$ Jacobian semi-norm is replaced by a truncated $H^1_\nu$ semi-norm (the truncated Jacobian approximation error); this formulation is derived from Proposition \ref{prop:decompose_h1_seminorm}, but does not include the terms involving the Jacobian nullspaces. The fourth is the same but using matrix subsampling (MS) of the Jacobian with dependent row and column samples, as in Proposition \ref{prop:submatrix_svd_convergence}; this formulation also does not include the nullspaces.

A summary of the various computational costs per sample point of the proposed methods shown in Table \ref{table:method_costs}.

\begin{table}[h!]
\center
\begin{tabular}{|l|c|c|}
\hline
                                     & \multicolumn{1}{l|}{Offline (\# PDE solves)}        & \multicolumn{1}{l|}{Training costs per sample} \\ \hline
Full $H^1_\nu$                       & $\min\{d_M,d_Q\}$                   & $O(d_Qd_M)$                 \\ \hline
Truncated $H^1_\nu$                    & $O(r)$                              & $O(r^2 +rd_Q+rd_M)$                    \\ \hline
Truncated $H^1_\nu$ matrix subsampling & $O(r)$                              & $O(k^2+kd_Q+kd_M)$                    \\ \hline
 Full $H^1_\nu$ with reduced bases             & $\min\{\bar{r}_M,\bar{r}_Q\}$ & $O(\bar{r}_M\bar{r}_Q)$                 \\ \hline
\end{tabular}
\caption{Summary of the dominant offline and online computational costs for the various Jacobian learning methodologies proposed herein (per sample point). The offline costs are measured in terms of the number of additional PDE solves required to compute Jacobian matrices for training. The online costs are measured by the memory and computational costs of adding the Jacobian information to the neural operator training. }
\label{table:method_costs}
\end{table}

We train for $100$ epochs using an Adam optimizer \cite{KingmaBa2014}, with TensorFlow \cite{AbadiAgarwalBarhamEtAl2016} default hyperparameters (constant learning rate $\alpha = 10^{-3}$ and batch size of $32$) for simplicity. For each problem we use $1,024$ data for generalization tests, and study the effects of varying training data size.

\subsection{Improvement of the Function Accuracy via Derivative Training}

We begin by assessing $L^2_\nu$ accuracy, which is defined below
\begin{equation}
  L^2_\nu \text{ accuracy:} \quad  \left(1 - \sqrt{\mathbb{E}_\nu\left[\frac{\|q  - f_w\|^2_{\ell^2(\mathbb{R}^{d_Q})}}{\|q\|^2_{\ell^2(\mathbb{R}{^{d_Q}})}}\right]}\right).
\end{equation}

Figure \ref{l2_comps} shows that derivative information improved the function approximation in every case except the generic encoder-decoder with full $H^1_\nu$ training. A possible reason for this is that the generic encoder-decoder with full $H^1_\nu$ is the only combination of network architecture and training loss that is not discretization dimension-independent, so more training data may be needed for this method. Interestingly, for all architectures the matrix-subsampled truncated $H^1_\nu$ improved function accuracy the most, particularly given limited data. Possibly, this additional stochastic approximation in the derivative loss helps with the neural network training in a manner analogous to to why stochastic gradient descent is preferred over gradient descent. Perhaps due to the increased variance of the parameter distribution, the reaction-diffusion problem was generally much harder to learn than the CRD and hyperelasticity problems, to be discussed below

\begin{figure}
\begin{subfigure}{0.5\textwidth}
\includegraphics[width = \textwidth]{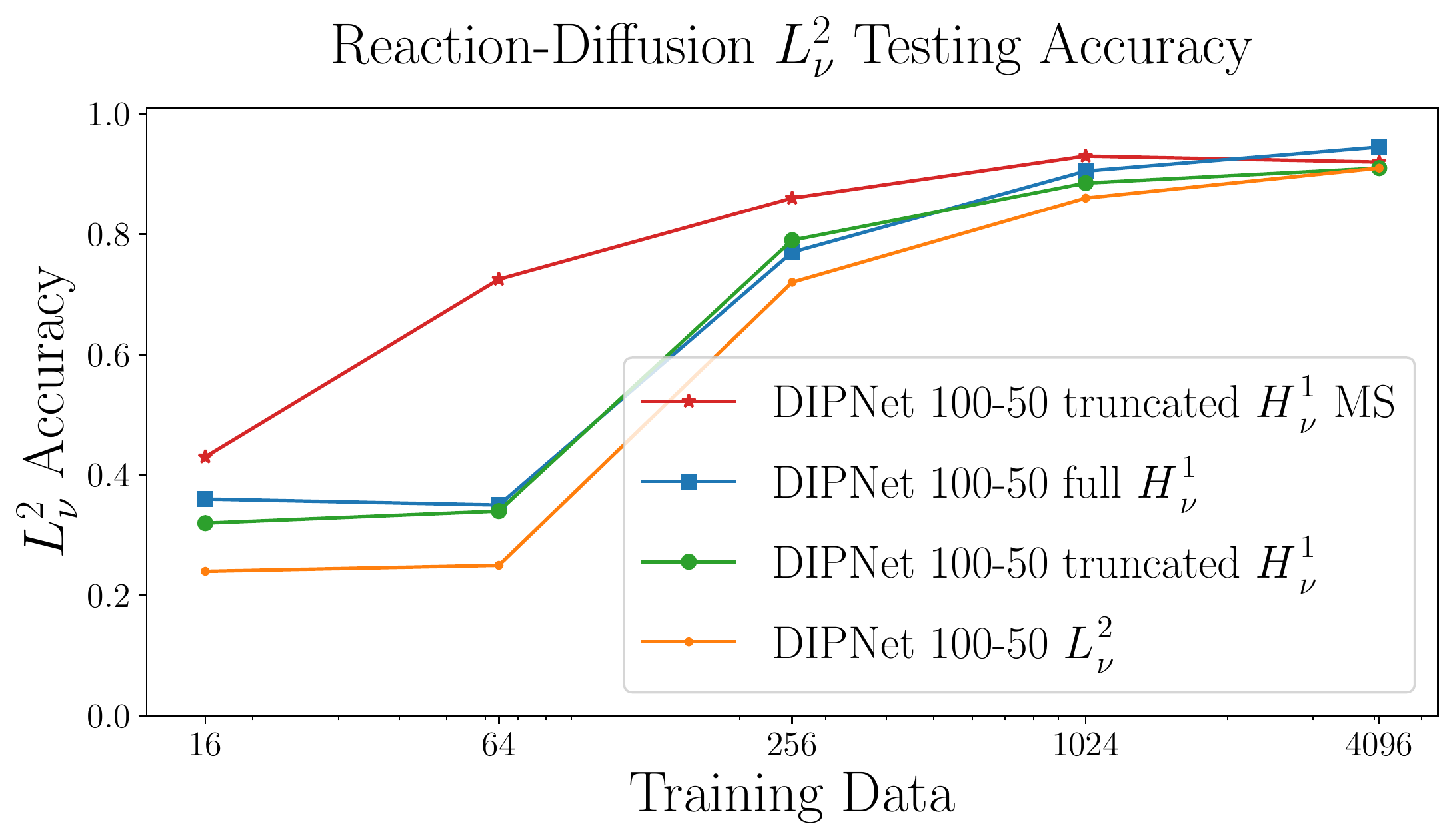}
\end{subfigure}%
\begin{subfigure}{0.5\textwidth}
\includegraphics[width = \textwidth]{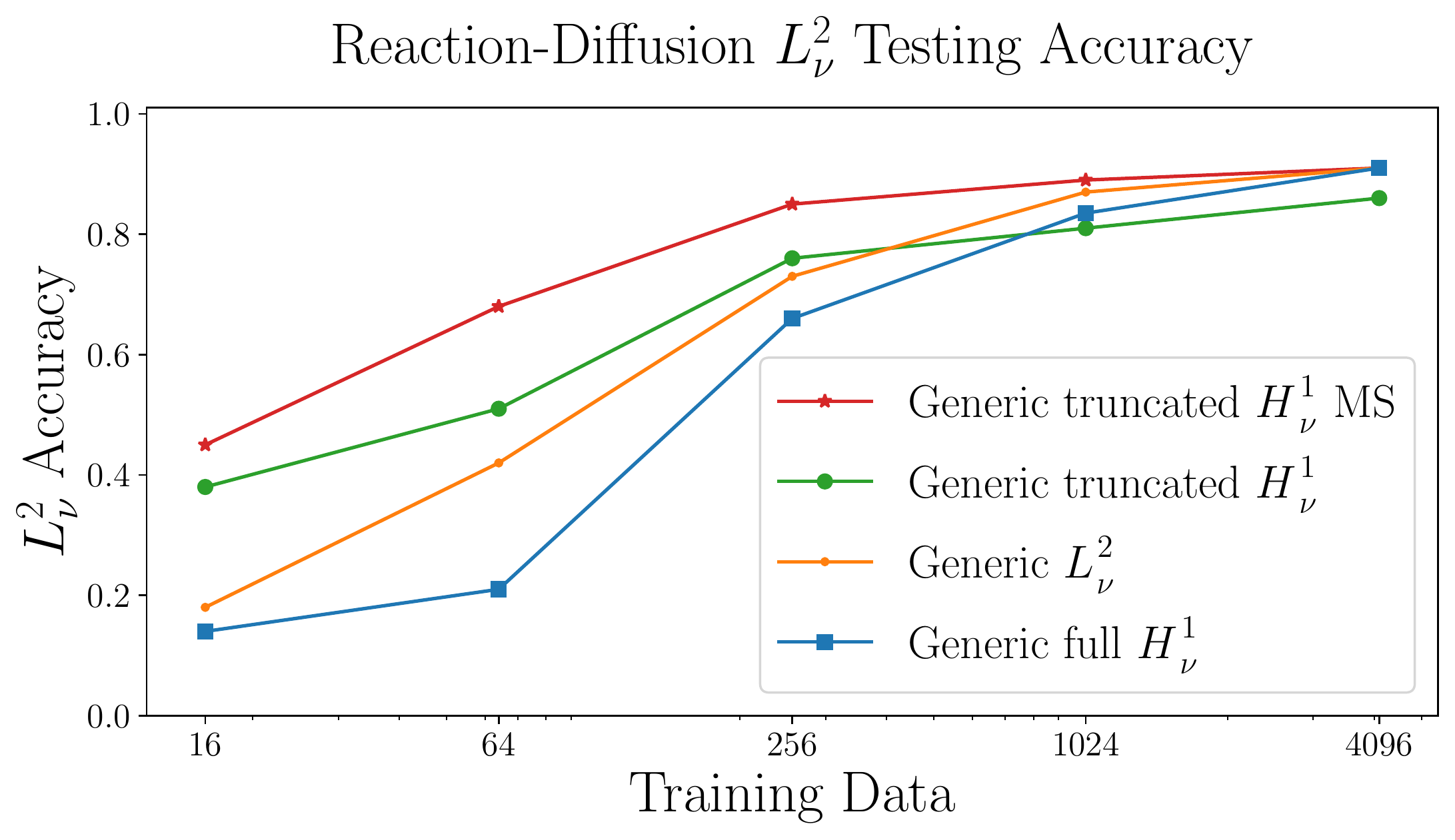}
\end{subfigure}
\begin{subfigure}{0.5\textwidth}
\includegraphics[width = \textwidth]{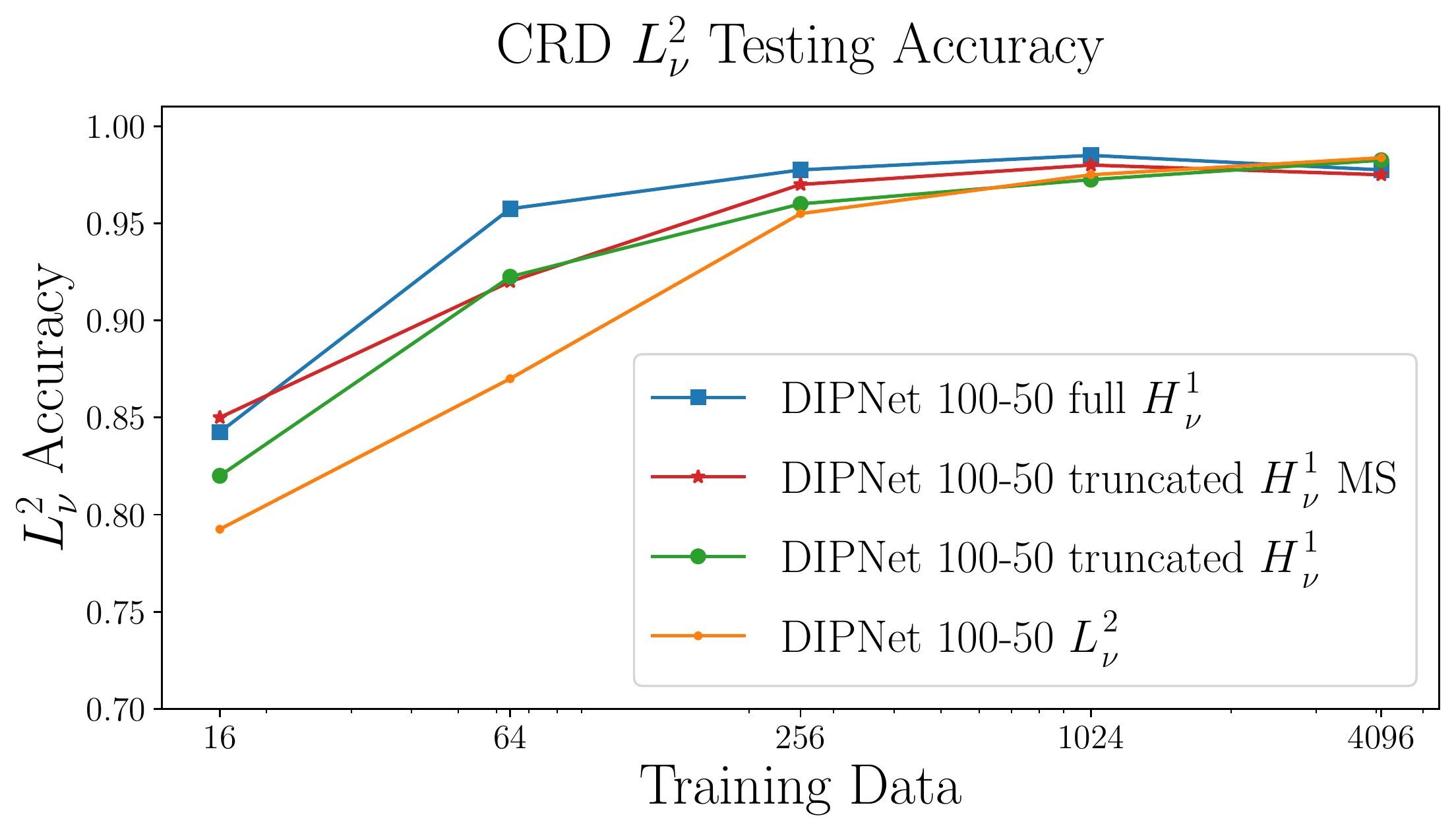}
\end{subfigure}%
\begin{subfigure}{0.5\textwidth}
\includegraphics[width = \textwidth]{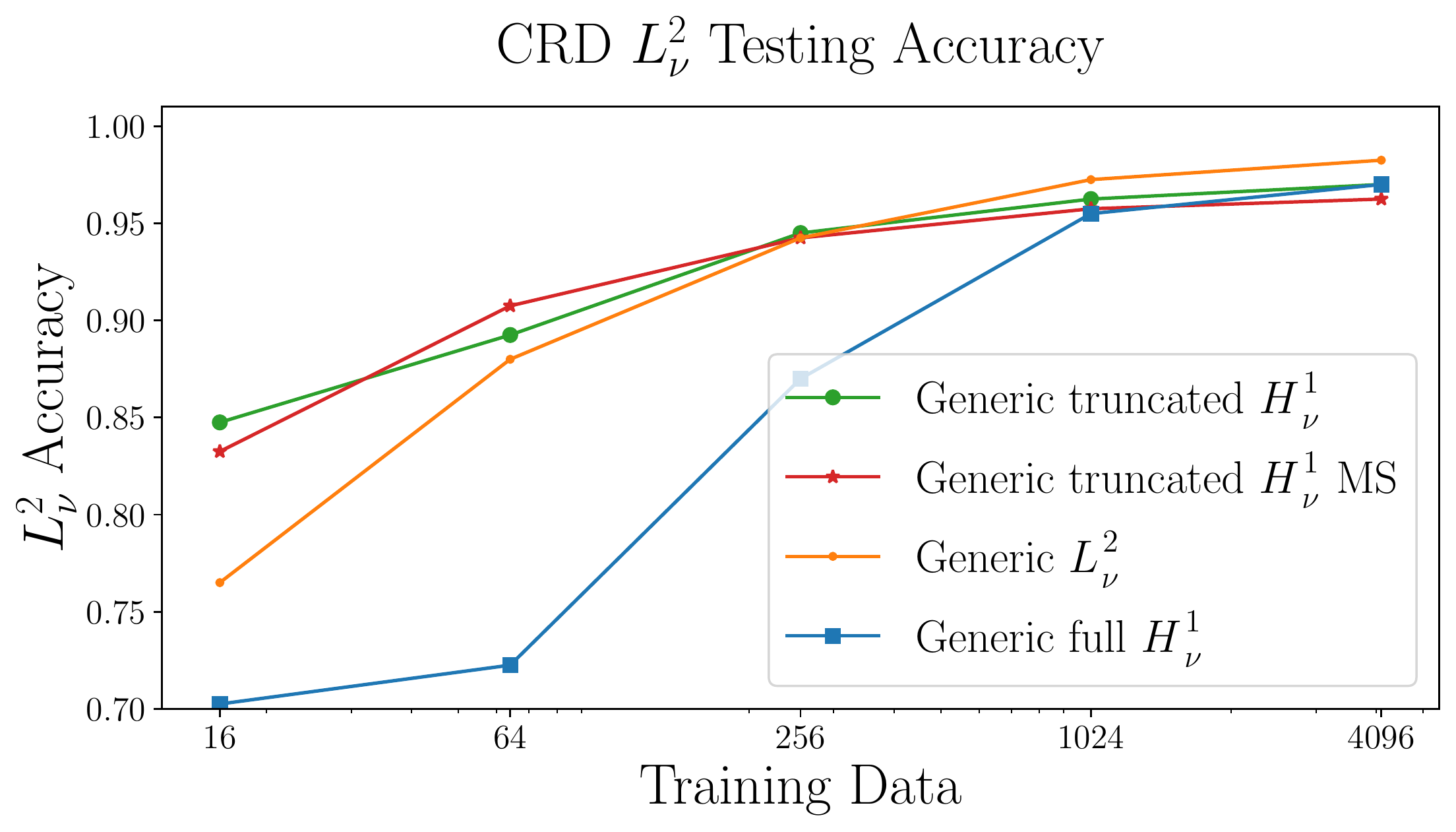}
\end{subfigure}
\begin{subfigure}{0.5\textwidth}
\includegraphics[width = \textwidth]{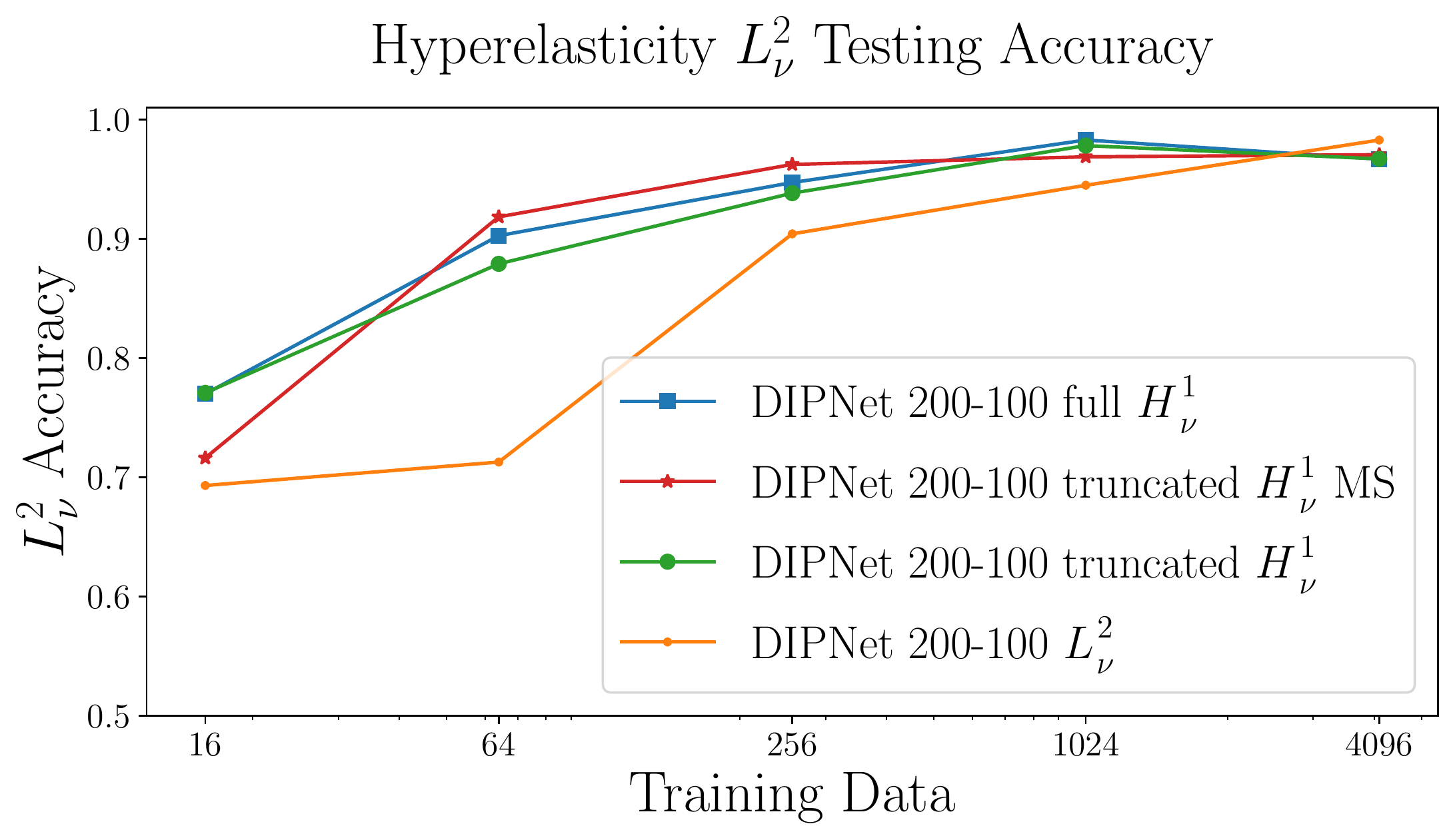}
\end{subfigure}%
\begin{subfigure}{0.5\textwidth}
\includegraphics[width = \textwidth]{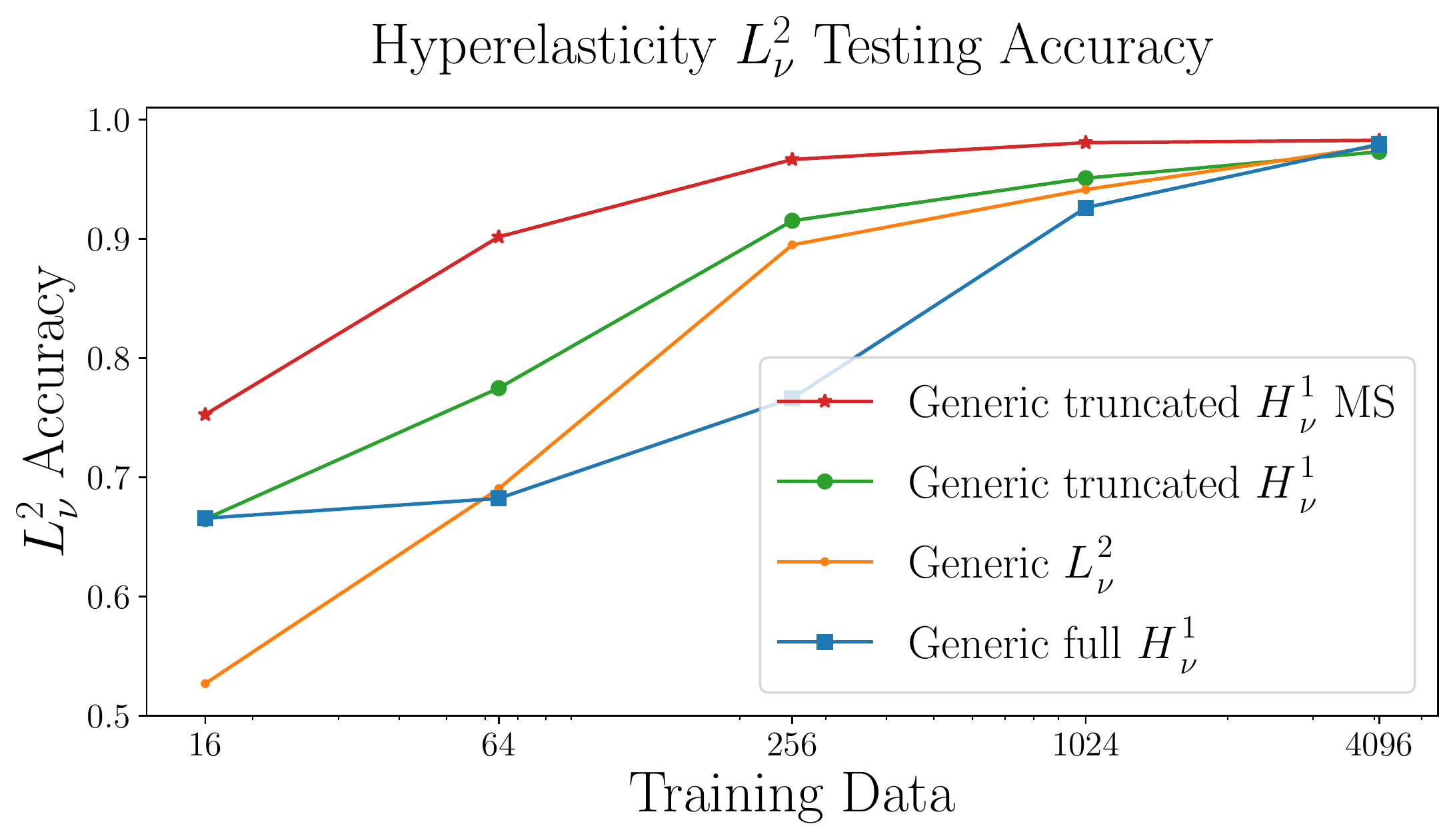}
\end{subfigure}
\caption{$L^2_\nu$ testing accuracies for reaction-diffusion (top row), convection-reaction diffusion (middle row) and the hyperelasticity problem (bottom row). The left column shows accuracies for the DIPNet architecture used in the corresponding problem, and the right column shows the generic encoder-decoder. The incorporation of derivative information improved function approximation in almost every case, except full $H^1_\nu$ training for the generic encoder-decoder in the limited data setting.}
\label{l2_comps}
\end{figure}

We note that the incorporation of the Jacobian information into the training loss function requires extra computation. However, as discussed in Section \ref{section:scalable_computation}, these computations are often negligible in comparison to the costs of evaluating the nonlinear forward map $m \mapsto q$. Thus these results make a strong case for the inclusion of derivative information in $L^2_\nu$ regression problems as an economical means of improving function approximation accuracy in the limited training data regime that is of practical interest, for nonlinear problems where Jacobian matrix-vector products are inexpensive in comparison to the evaluation of $m \mapsto q$.

\subsection{Evaluating the Derivative Accuracy}

We proceed by investigating derivative accuracy. Based on how the derivative learning problem is formulated, it is natural to first consider $H^1_\nu$ semi-norm accuracy, which is defined as:

\begin{equation}
  \text{$H^1_\nu$ semi-norm accuracy:} \quad  \left(1 - \sqrt{\mathbb{E}_\nu\left[\frac{\|\nabla q - \nabla f_w\|^2_{F(\mathbb{R}^{d_Q\times d_M})}}{\|\nabla q\|^2_{F(\mathbb{R}{^{d_Q\times d_M}})}}\right]}\right).
\end{equation}

We begin by noting that this is a truly exacting metric, since unlike the $L^2_\nu$ accuracy, which requires that only $d_Q$ entries of a vector be well-approximated across parameter space, here we require that $d_Q\times d_M$ entries of a (Jacobian) matrix be well-approximated across parameter space. For the first two problems this matrix has $211,250$ entries. For the hyperelasticity problem it has $845,000$ entries. To avoid repetition, in Figure \ref{hyperelasticity_h1_comps} we show results for the hyperelasticity problem, which are representative of the other two problems. The general takeaway is that the DIPNets with $H^1_\nu$ training performed the best in Jacobian predictions. For the generic encoder-decoder, derivative learning was substantially harder, and only the full $H^1_\nu$ training performed similar to the DIPNets ($\sim 70\%$ accuracy), and this was only the case when a significant amount of training data was available. Additionally, Jacobian accuracies can be extremely poor for generic encoder-decoder networks, which makes a strong case for not differentiating networks that have not been trained with derivative information.

\begin{figure}
\begin{subfigure}{0.5\textwidth}
\includegraphics[width = \textwidth]{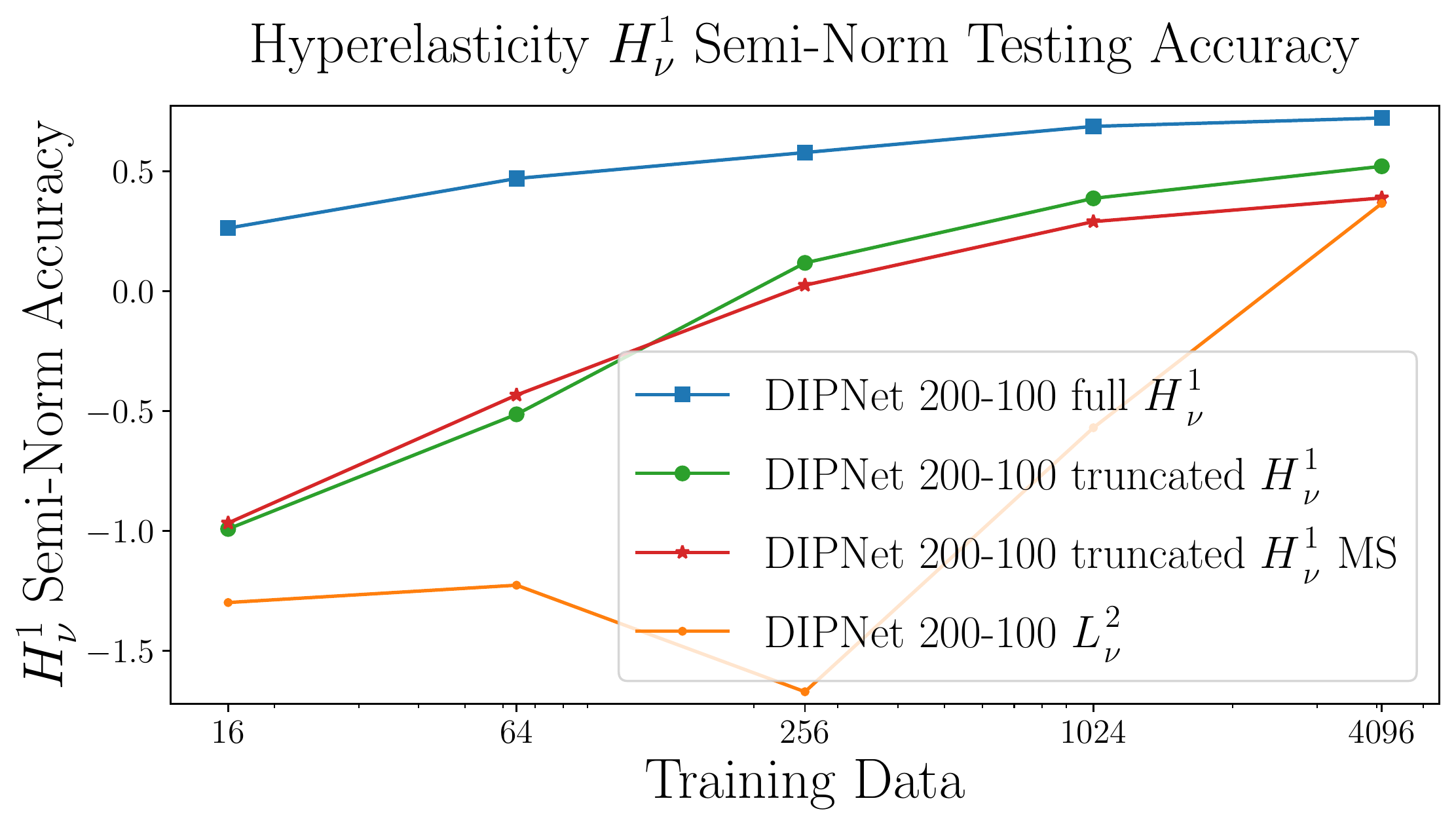}
\end{subfigure}%
\begin{subfigure}{\textwidth}
\includegraphics[width = 0.5\textwidth]{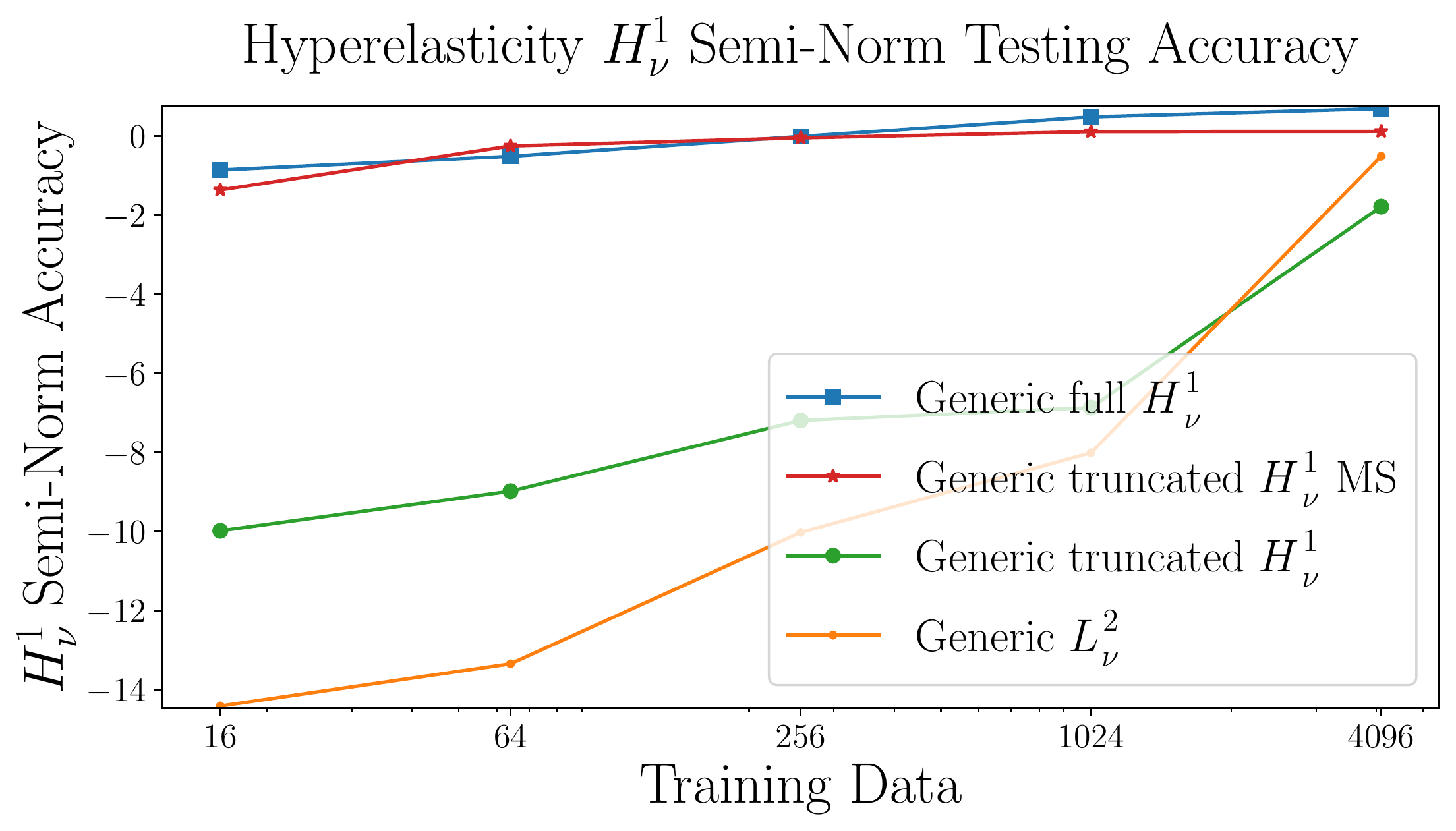}
\end{subfigure}
\caption{Hyperelasticity $H^1_\nu$ semi-norm accuracies for the DIPNet 200-100 (left), and a study of $H^1_\nu$ semi-norm accuracies for the Generic architecture (bottom). The left plot shows that the full $H^1_\nu$ training yielded the best Jacobian predictions for the DIPNet 200-100 architecture. The right plot demonstrates the unreliability of derivative approximations for generic-encoder architectures trained without derivative information. }
\label{hyperelasticity_h1_comps}
\end{figure}
\subsubsection{Gradients and (Gauss--Newton) Hessians of Inverse Problems}

As was previously mentioned, the Jacobian Frobenius metric is overly-exacting, and while it is directly related to the training formulations that we have proposed, is not a necessary indicator of the usefulness of derivative information in outer-loop contexts. In order to better assess the impact of derivative approximation errors, we propose to look at errors in gradient vectors and Gauss--Newton Hessian matrices, which are critical for the scalable solution of high-dimensional inverse problems. Gradient accuracy illuminates how both function and derivative accuracy can propagate to another quantity, while Gauss--Newton Hessians illuminate the usefulness of the Jacobian predictions alone. 

Given noise-corrupted observations $d\in \mathbb{R}^{d_Q}$ of the solution of the PDE (state) at the observable locations, we seek to find model parameters $m\in \mathbb{R}^{d_M}$ that are consistent with the model and the data. Given zero-mean noise, with covariance $\Gamma_\text{noise}$, the (deterministic) inverse problem can be formulated as 
\begin{equation}
  \min_m \Phi(m) :=  \underbrace{\frac{1}{2}\| q(m) - d\|_{\Gamma^{-1}_\text{noise}}^2}_\text{data misfit} + \mathcal{R}(m),
\end{equation}
where $\mathcal{R}$ is a regularization term. In the Bayesian inverse problem, the regularization term becomes a prior, which in the Gaussian case takes the form $\|m - m_0\|_{\mathcal{C}^{-1}}^2$, where $m_0$ is the parameter mean, and $\mathcal{C}$ is its covariance. The gradient of the misfit is $g = \nabla q(m)^T\Gamma^{-1}_\text{noise}(q(m) - d)$. We use the following accuracy metric for gradients:

\begin{equation}
  \text{Gradient Accuracy:} \quad \left(1 - \sqrt{\mathbb{E}_\nu\left[\frac{\|g_\text{true} - g_\text{prediction}\|^2_{\ell^2(\mathbb{R}^{d_M})}}{\|g_\text{true}\|^2_{\ell^2(\mathbb{R}^{d_M})}}\right]} \right).
\end{equation}

To avoid repetition, we show representative results from the convection-reaction-diffusion problem in Figure \ref{crd_g_comps}. We generate $d$ by evaluating the PDE map and corrupting the observables with $1\%$ Gaussian noise. In general the DIPNet with full $H^1_\nu$ training produced the most accurate gradients. The generic encoder-decoders overall did not produce gradients that were as accurate as those produced by the DIPNets; as in other cases the full $H^1_\nu$ training for the generic encoder-decoder struggled, particularly given limited training data. Interestingly the $L^2_\nu$ trained networks eventually produced reasonably accurate gradients. The gradient accuracy depends both on the function and the derivative accuracy, but it depends more mildly on the Jacobian accuracy than the function, as the Jacobian transpose action only needs to be accurate in the one-dimensional subspace spanned by the data misfit. In order to account for this, we average gradient errors over different data misfits, and locations in parameter space.

\begin{figure}
\begin{subfigure}{0.5\textwidth}
\includegraphics[width = \textwidth]{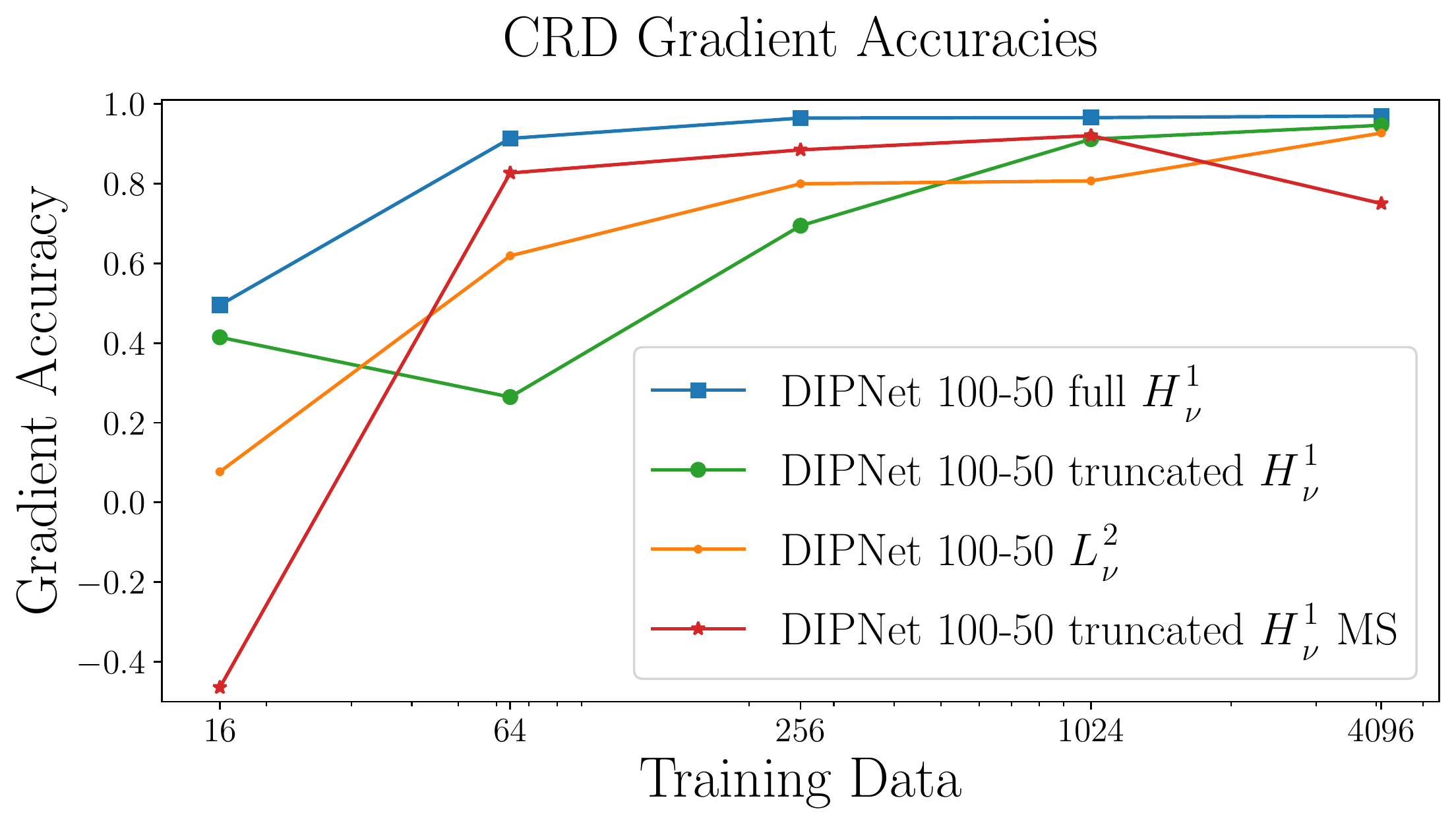}
\end{subfigure}%
\begin{subfigure}{0.5\textwidth}
\includegraphics[width = \textwidth]{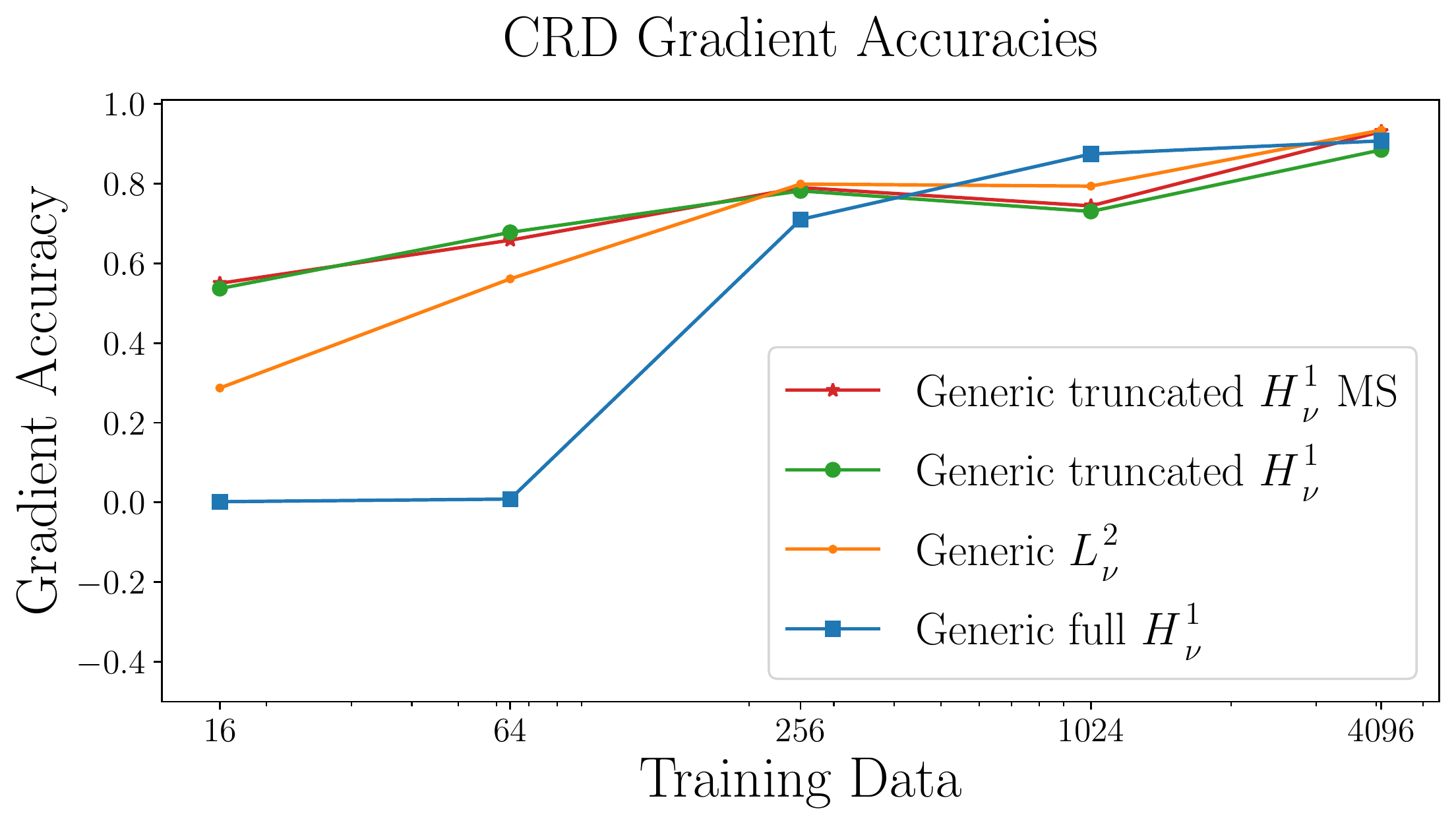}
\end{subfigure}
\caption{Inverse problem gradient accuracies for the DIPNet 100-50 (left) and generic encoder-decoder (right) for the CRD problem. The left plot demonstrates that the DIPNet can achieve highly accurate inverse problem gradients, in particular for the full $H^1_\nu$ training. In general the generic encoder-decoder network produced less accurate gradients than the DIPNet. Interestingly, in both cases the $L^2_\nu$ network eventually produced reasonably accurate gradients given enough training data.}
\label{crd_g_comps}
\end{figure}

Gauss--Newton Hessians, $\nabla q ^T \nabla q \in \mathbb{R}^{d_M\times d_M}$, are useful in many outer-loop problems. They can be used to accelerate the convergence rate of optimization methods (e.g., deterministic inverse problems), and generate proposals that yield higher acceptance rates and better mixing in Markov-chain Monte Carlo solutions of Bayesian inverse problems \cite{KimVillaParnoEtAl22}. They are useful in optimal experimental design problems, where their spectral information can be used to efficiently approximate the expected information gain. Being able to efficiently and accurately approximate Gauss--Newton Hessians over parameter space enables significant computational speedups in the aforementioned problems, as well as in other contexts. 

We study how well the trained neural operators can approximate the true Gauss--Newton Hessians across parameter space. We also study how well the neural operator--approximated Gauss--Newton Hessians agree with the true Gauss--Newton Hessians, in the span of the true Gauss--Newton Hessian's dominant eigenvectors (which coincide with the dominant Jacobian right singular vectors $V_r$). This allows us to better understand the nature of the Gauss--Newton Hessian approximation errors, since the discrepancy between the full and reduced Gauss--Newton accuracies represents errors made in the null-space of the true Gauss--Newton Hessian. For this task we propose the following two accuracy metrics:

\begin{subequations}
\begin{align}
&\text{Gauss--Newton accuracy:} \quad & \left(1 - \sqrt{\mathbb{E}_\nu\left[\frac{\|\nabla q^T\nabla q - \nabla f_w^T\nabla f_w\|^2_{F(\mathbb{R}^{d_M\times d_M})}}{\|\nabla q^T\nabla q\|^2_{F(\mathbb{R}{^{d_M\times d_M}})}}\right]}\right) \label{gn_accuracy}\\
&\begin{array}{r}
  \text{Reduced Gauss--Newton}\\
  \text{accuracy:}
  \end{array} \quad & \left(1 - \sqrt{\mathbb{E}_\nu\left[\frac{\|V_r^T(\nabla q^T\nabla q - \nabla f_w^T\nabla f_w)V_r\|^2_{F(\mathbb{R}^{r\times r})}}{\|V_r^T\nabla q^T\nabla qV_r\|^2_{F(\mathbb{R}{^{r\times r}})}}\right]}\right) \label{rgn_accuracy}
\end{align}
\end{subequations}

Figure \ref{gn_comps} shows the full Gauss--Newton Hessian accuracies \eqref{gn_accuracy} as well as reduced Gauss--Newton Hessian accuracies \eqref{rgn_accuracy} for all three problems. Poorly performing networks were omitted from these plots. A first important takeaway is that no $L^2_\nu$ trained networks were able to produce accurate Gauss--Newton Hessian approximations. The DIPNet full $H^1_\nu$ networks performed well in approximating the Gauss--Newton across parameter space across all three problems. The truncated $H^1_\nu$ training formulation was able to produce accurate approximations of the true Gauss--Newton Hessian in its dominant eigenvectors. However, due to the lack of nullspace information encoded in this training formulation, it was not able to accurately approximate the nullspace of the Gauss--Newton Hessians. In general the generic encoder-decoder networks were unable to produce accurate Gauss--Newton Hessian approximations. The exception however is in the hyperelasticity problem given large amounts of data. For this result we extended our training to $7168$, where the overparametrized generic encoder-decoder produced highly accurate $\sim 92.5\%$ accurate Gauss--Newton Hessian predictions. These results together tell an interesting story: the reduced basis DIPNet architectures can be trained efficiently and reliably generate accurate predictions of Gauss--Newton Hessians. However, given large amounts of training data, the reduced basis architecture accuracies stall, and the overparametrized network can produce superior approximations. This agrees with a classical heuristic in neural network training, that given more training data, and more representation power in the network, one can produce more accurate predictions.

\begin{figure}
\begin{subfigure}{0.4\textwidth}
\includegraphics[width = \textwidth]{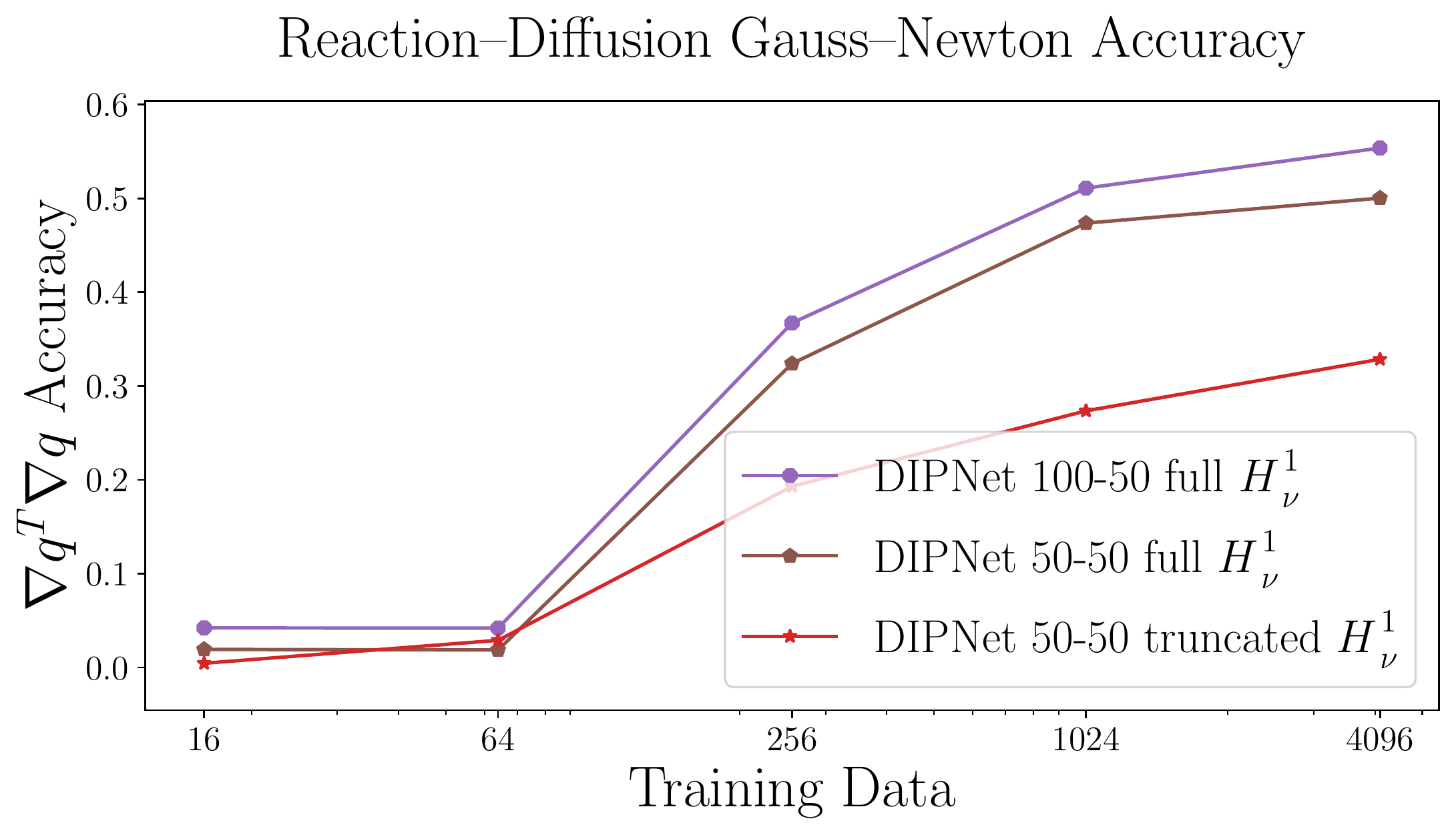}
\end{subfigure}%
\begin{subfigure}{0.6\textwidth}
\includegraphics[width = \textwidth]{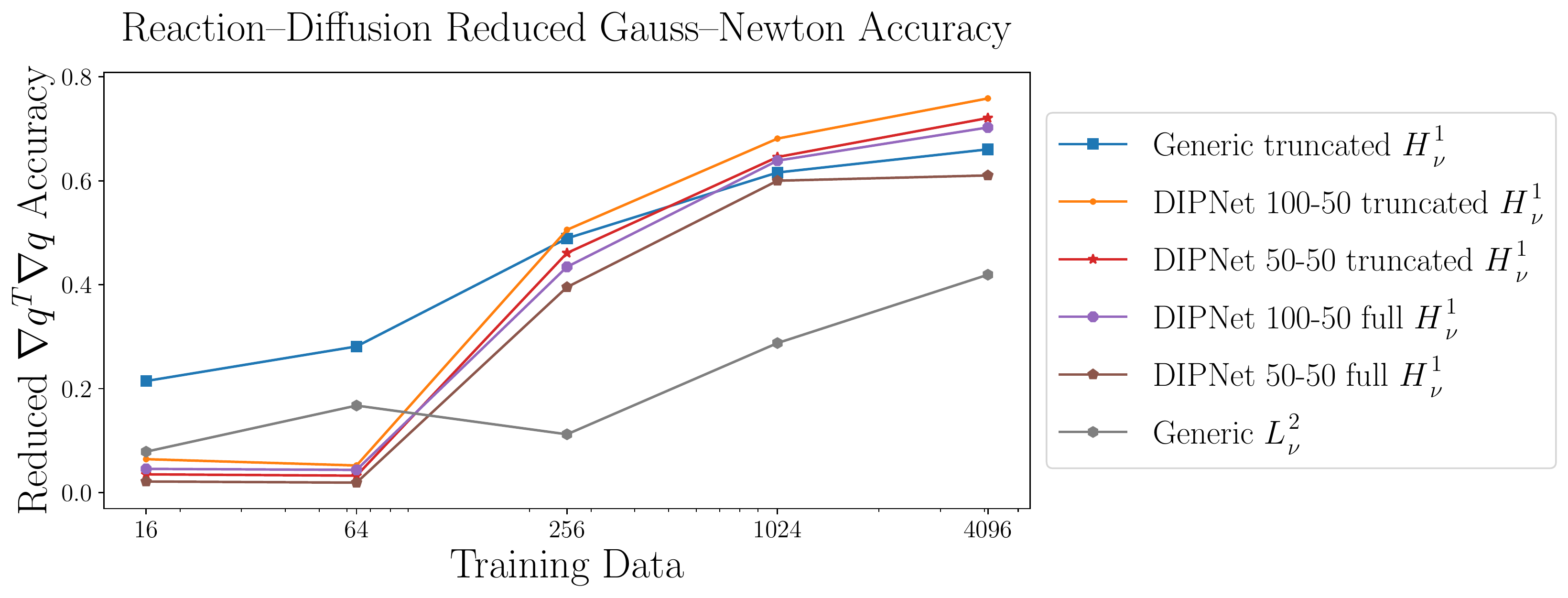}
\end{subfigure}
\begin{subfigure}{0.4\textwidth}
\includegraphics[width = \textwidth]{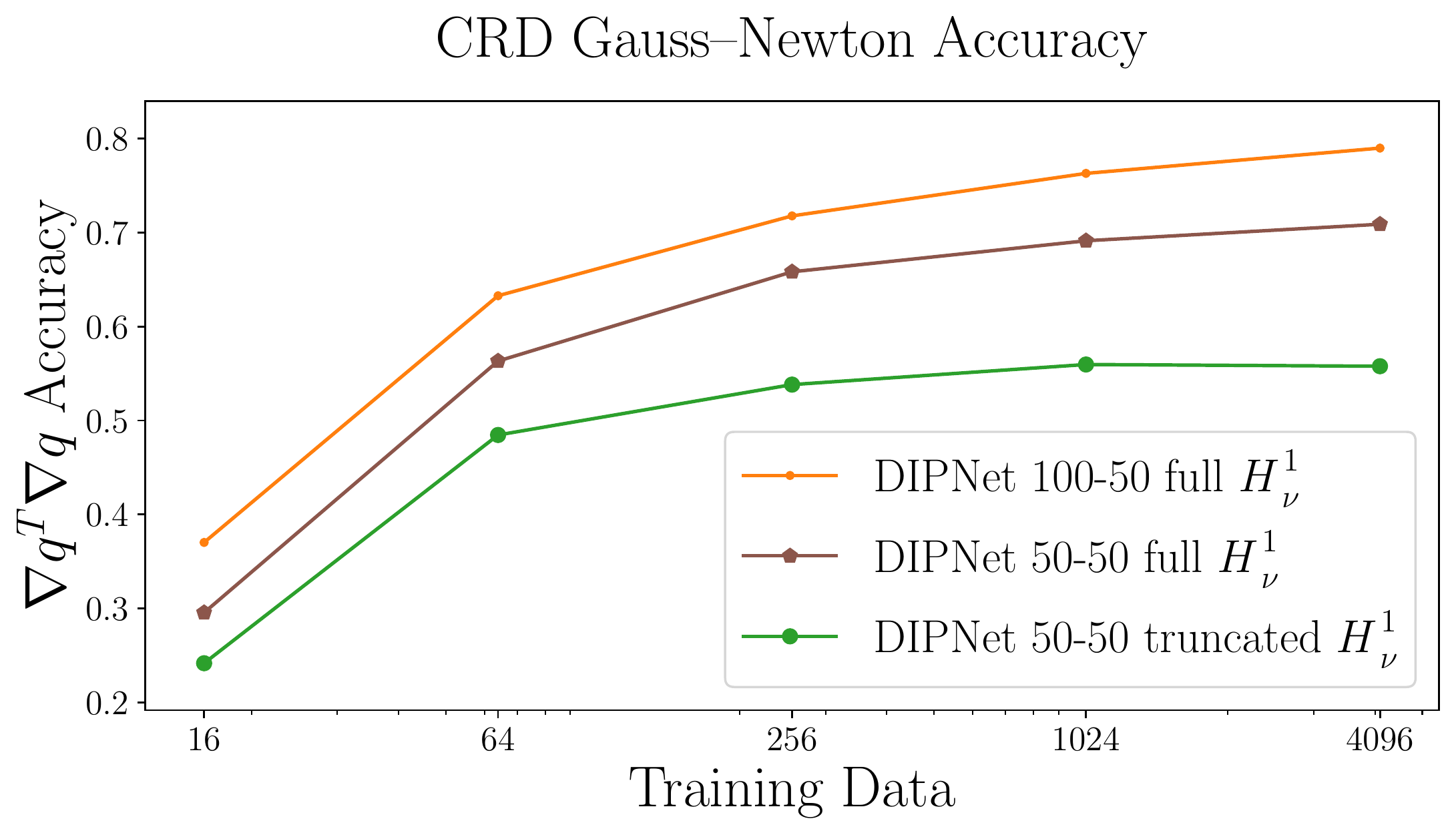}
\end{subfigure}%
\begin{subfigure}{0.6\textwidth}
\includegraphics[width = \textwidth]{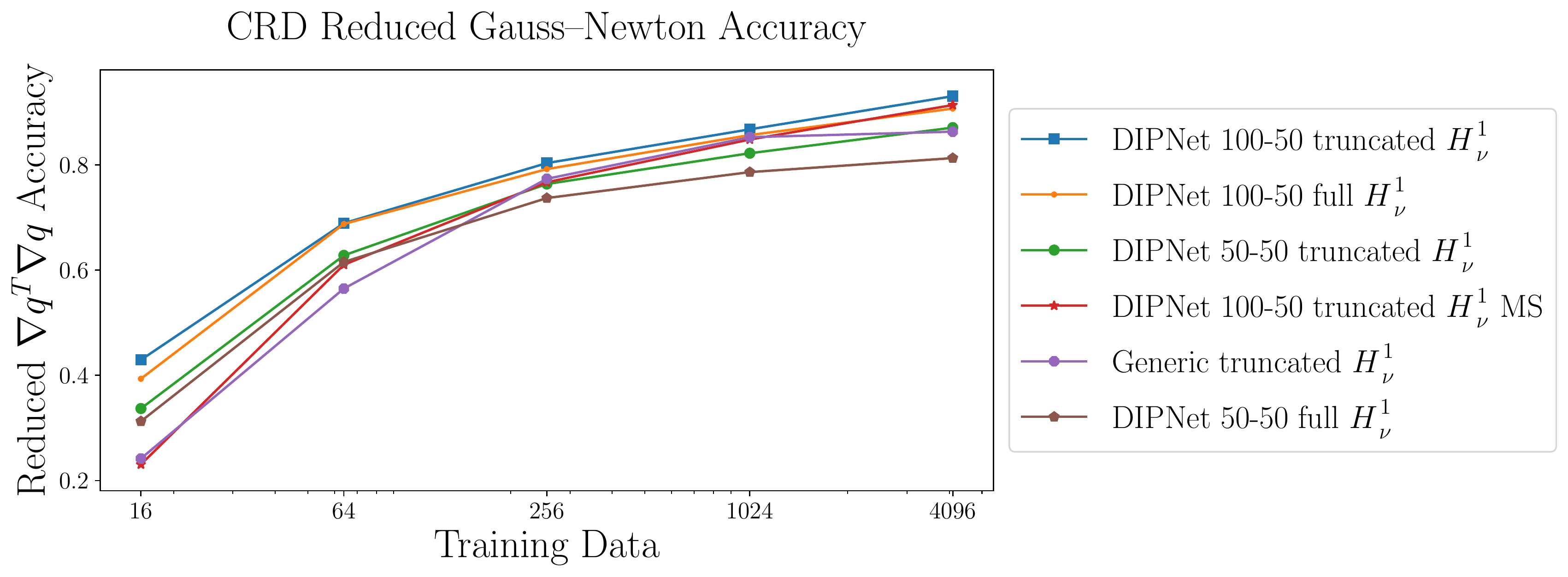}
\end{subfigure}
\begin{subfigure}{0.4\textwidth}
\includegraphics[width = \textwidth]{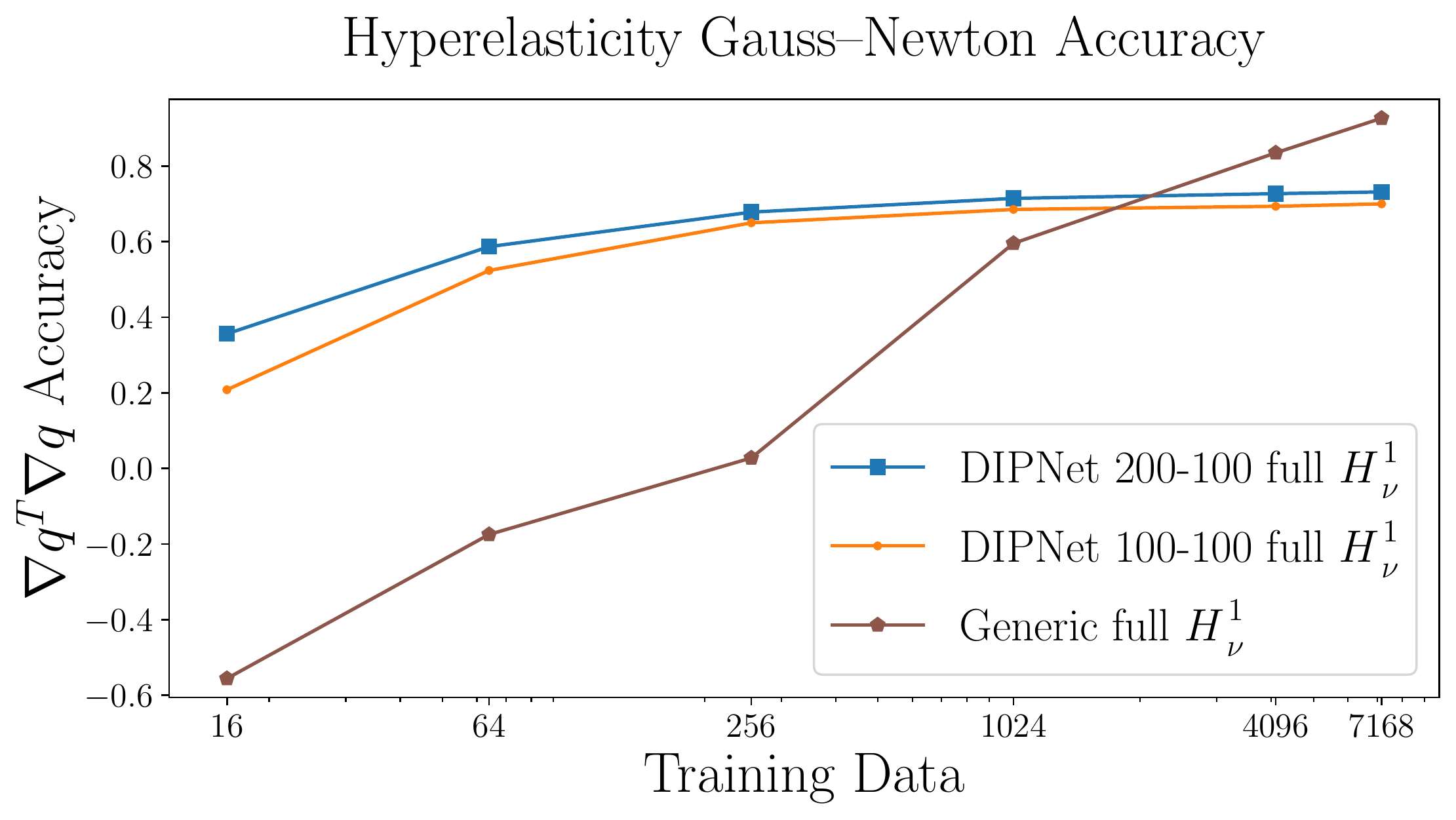}
\end{subfigure}%
\begin{subfigure}{0.6\textwidth}
\includegraphics[width = \textwidth]{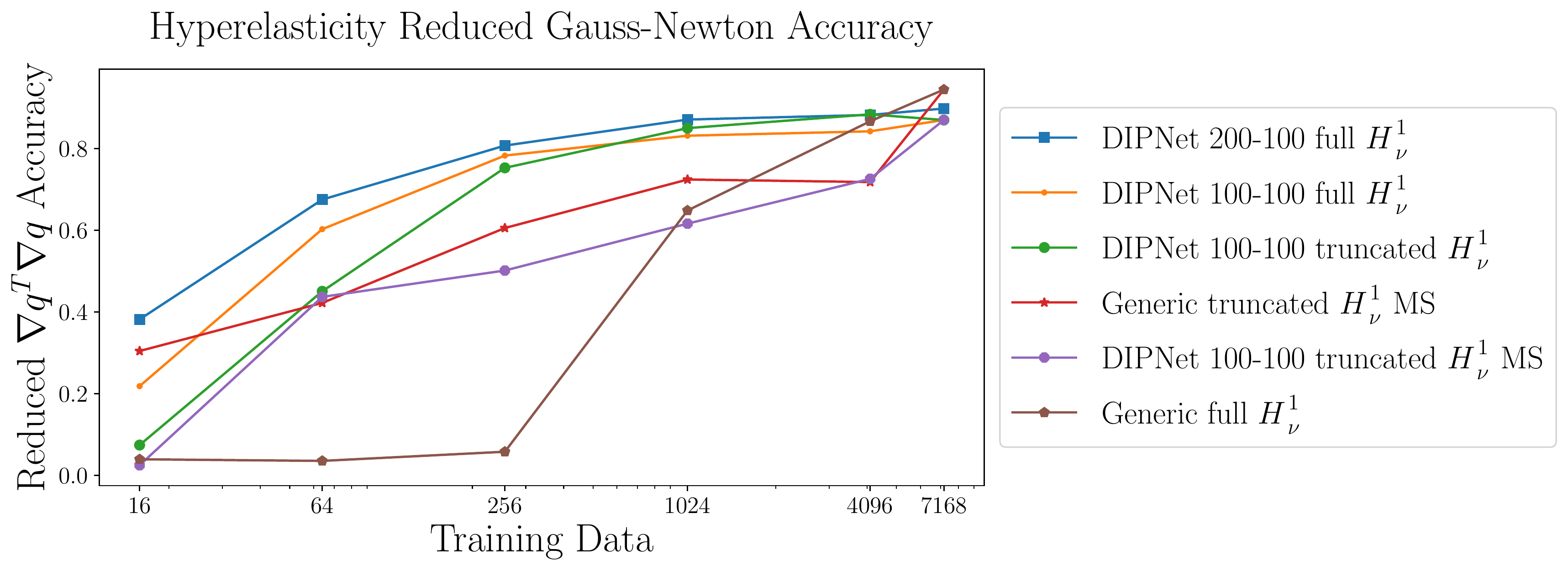}
\end{subfigure}
\caption{Gauss--Newton accuracies (left) and reduced Gauss--Newton accuracies (right) for the all three problems. The left plots shows that the full $H^1_\nu$ training yielded the best approximations of the Gauss--Newton Hessian. In particular, the DIPNet approximations were best for limited training data, but eventually the more expressive generic-encoder decoder outperformed for the hyperelasticity problem, given sufficient training data. We suspect that the stalling of the DIPNet accuracies (particularly evidenced in the hyperelasticity case) is due to the limited representation power of the chosen architecture. A deeper / wider network could lead to higher accuracy when the training set is richer. The right plot demonstrates again that the reduced Gauss--Newton predictions were better for the truncated $H^1_\nu$ training formulation, but these still failed to translate to accurate Gauss--Newton predictions due to the inability of this formulation to properly penalize the nullspace, which results in corrupted approximations of the true Gauss--Newton Hessian. }
\label{gn_comps}
\end{figure}

In summary the $H^1_\nu$ training with the reduced basis architecture provides an efficient and scalable means of learning high-dimensional derivatives, since it is a discretization dimension-independent method. This combination of network and training formulation led to improved function approximations and additionally superior derivative approximations to convention methods, in particular given limited training data. The generic encoder-decoder network in one case was able to produce superior predictions of Gauss--Newton Hessians to the DIPNets, given enough training data. However, this strategy is unfeasible as the size of the problem grows, since the method is dimension independent. The dimension-independent methods have the benefit of compute, energy and memory efficiency over dimension-dependent methods. Overall, numerical results demonstrate that these methods can be reliably deployed to help with both derivative and function accuracy for high-dimensional problems, if the dimensionality of the derivative information is independent of the discretization dimension of the problem. Derivative-informed neural operators can then be used to enable efficient derivative-based computations to accelerate the solution of outer-loop problems.

\section{Conclusions}
In this work, we investigated the difficulties of high-dimensional parametric derivative learning via neural operators, and proposed efficient and scalable methods that exploit low-dimensionality of derivative information, leading to practical dimension-independent methods for derivative learning. Low-dimensionality of derivative information can be proven analytically in many problems, and is demonstrated empirically in others.  We demonstrate that when derivative information is low-dimensional, both the generation of Jacobian training data and the derivative learning problem are computationally feasible via the use of efficient compression strategies that exploit this low-dimensionality. Compressed Jacobian training data can be computed matrix-free, and can be made computationally efficient by amortizing expensive computations used in the evaluation of the forward map. Compressed Jacobian information can then be encoded into several training formulations that leverage truncated SVD compression, or reduced dimensionality of a neural operator via a reduced basis architecture. 

In numerical results we explore the effects of these various strategies, which lead to the following conclusions:
\begin{enumerate}
	\item Derivative information can substantially improve the function approximation. In many cases it can be computed efficiently, by amortizing computations from the forward map, when the forward map is highly nonlinear, derivative training data can be used as an economical substitute for function training data, in order to achieve higher function accuracy.
	\item Neural operators with high-dimensional parameter dependence trained without derivative information yield inaccurate and unreliable predictions of derivative quantities such as Gauss--Newton Hessians. Therefore existing neural operators trained without derivative information are inadequate to be deployed in outer-loop problems that require derivative information to accelerate convergence (e.g., geometric MCMC \cite{BeskosGirolamiLanEtAl17,KimVillaParnoEtAl22,MartinWilcoxBursteddeetal2012}, or derivative-based variational transport \cite{ChenGhattas20,ChenWuChenEtAl19}).
	\item Reduced basis neural operator architectures are well-suited to derivative learning in very high-dimensional settings. As a result of Theorem \ref{theorem:dim_independent_dino}, derivative learning for these architectures is naturally dependent only on the reduced basis dimension, instead of the ambient discretization dimension. As the discretization dimension grows, the costs of derivative training become infeasible for dimension-dependent architectures, such as generic encoder-decoders, but remain constant for reduced basis architectures, such as DIPNets.

	\item Reduced basis architectures trained with derivative information led to significantly better function approximation. Given limited training data, these architectures often achieved $10-20\%$ better function approximations than those trained just on function data. These architectures yielded highly accurate ($>90\%$) inverse problem gradients; often $20-30\%$ more accurate than those trained without derivative information. Additionally, in no case did any method trained without derivative information give an accurate approximation of Gauss--Newton Hessians. Numerical results demonstrate that reduced basis networks can achieve good approximations of Gauss--Newton Hessians (e.g., $>70\%$ in the convection-reaction-diffusion problem) given limited training data. Generic encoder-decoders trained with significant training data were able to achieve good Gauss--Newton accuracies (e.g. $>90\%$ in the hyperelasticity problem). These results demonstrate that depending on the availability of training data, neural operators can be trained to efficiently approximate high-dimensional parametric maps and associated derivatives throughout parameter spaces.
\end{enumerate}

This work represents a significant step towards constructing neural operators that enable derivative-based outer-loop algorithms for efficiently exploring high-dimensional parameter spaces. In ongoing and future work, we plan to apply these derivative-informed neural operators to the solution of various outer-loop problems including Bayesian inverse problems, optimal experimental design, and optimization under uncertainty.

A fundamental limitation of this work is the assumption of the compressibility of the derivative information. While for large classes of problems one can prove analytically that derivative information is compact and discretization-independent, for certain problems including high frequency wave propagation and advection dominated flows, the derivative information may not be compressible via truncated SVD. New algorithmic innovations such as more sophisticated compression schemes will be required to enable efficient derivative learning in these contexts.

\section{Software Availability}
Code used in the numerical results is available in the \texttt{applications/} folder of \texttt{dino} \cite{dino} (\texttt{v0.2.0}), a python library that facilitates the construction of derivative-informed neural operators. Sampling of training data, and construction of reduced bases used in numerical results makes use of \texttt{hippyflow} \cite{hippyflow}. Adjoint-based derivative computations and parameter sampling  were performed using hIPPYlib \cite{VillaPetraGhattas2018,VillaPetraGhattas20}. FEniCS \cite{AlnaesBlechtaHakeEtAl2015} and PETSc \cite{BalayAbhyankarAdamsEtAl15} were used for finite element discretizations and solvers. Tensorflow was used for the machine learning \cite{AbadiAgarwalBarhamEtAl2016}.

\section*{Acknowledgements}
This research was supported by ARPA-E grant DE-AR0001208; DOE grants DE-SC0019303, DE-SC0023171 and DE-SC0021239;
NSF DMS grant 2012453; and DOD MURI FA9550-21-1-0084. The authors would like to thank Lianghao Cao and Dingcheng Luo for helpful comments during the preparation of work.

\addcontentsline{toc}{section}{References}
\bibliographystyle{IEEEtran}

\biboptions{sort,numbers,comma,compress}                 
\bibliography{local,references}

\section{Appendix} \label{section:appendix}

\subsection{Generation of Jacobian training data} \label{section:train_data_generation}

In this section we provide pseudo-code for the process of Jacobian training data generation, in the specific case of a steady-state PDE problem. The specific implementation we used for this work is contained in the \texttt{construct\_low\_rank\_Jacobians} method in the \texttt{ActiveSubspaceProjector} object in \texttt{hippyflow} \cite{hippyflow}.

\begin{algorithm}[H]
\caption{Jacobian training data generation for a steady-state PDE}\label{alg:data_gen}
\begin{algorithmic}
\State Given a reference distribution for the parameter, i.e., $m \sim \nu$.
\State Pre-decide appropriate global rank for Jacobian, $r$.
\State When quantity of interest is relatively low-dimensional, take $r = d_Q$.
\State Assemble function for $B = \frac{\partial q}{\partial u}$, i.e., the derivative of the quantity of interest with respect to the state. When the state is the quantity of interest, this is identity.

\For{$i =1,\dots,N$}
\State Sample $m_i \sim \nu$.
\State Solve state equation $R(u_i,m_i)=0$ for $u_i$.
\State Save output quantity of interest $q(m)$.
\If{using direct solver}
    \State Assemble linearized forward operator $\frac{\partial R}{\partial u}$ as a sparse matrix evaluated at $u_i, m_i$.
    \State Form sparse linear solver $\left[\frac{\partial R}{\partial u}\right]^{-1}$ via factorization (this includes the transpose solver).
\ElsIf{using Krylov solver}
    \State Assemble functions for the action and transpose action of $\frac{\partial R}{\partial u}$ on vectors, evaluated at $u_i, m_i$.
    \State Form Krylov linear solver $\left[\frac{\partial R}{\partial u}\right]^{-1}$ based on these functions.
\EndIf
\State Assemble a function for the action and transpose action of $\frac{\partial R}{\partial m}$ on a vector, evaluated at $u_i,m_i$.
\State Form the action and transpose action of $\nabla q = B\left[\frac{\partial R}{\partial u}\right]^{-1}\frac{\partial R}{\partial m}$ as a consequence of the previous steps.

\If{ using reduced basis neural operator with bases $\Phi_{\bar{r}_Q}$ and / or $\Psi_{\bar{r}_M}$}
    \If{$\mathbf{r_M}\leq\mathbf{r_Q}$}
        \State Compute right reduced Jacobian $\nabla q \Psi_{\bar{r}_M} $ matrix-free.

    \Else
        \State Compute left reduced Jacobian $\nabla q^T \Phi_{\bar{r}_Q}$ matrix-free.
    \EndIf
    \State Compute and save reduced Jacobian $\Phi_{\bar{r}_Q}^T\nabla q \Psi_{\bar{r}_M}$, or relevant left/right reduced Jacobian, if only output or input reduction is used in the neural operator.
\Else{ compute truncated SVD of Jacobian}
    \State $U_i,\Sigma_i, V_i$ = \texttt{randomizedSVD}($\nabla q$,\texttt{rank} = r) (see e.g., \cite{HalkoMartinssonTropp11,MartinssonTropp2020})
    \State Save $U_i,\Sigma_i,V_i$.

\EndIf

\EndFor
\end{algorithmic}
\end{algorithm}

The process of computing the training data for time-dependent quantity of interests is similar. For example, consider an observable at a final time, or at discrete measurements of a time-dependent system over a fixed window. In this case the action of $\left[\frac{\partial R}{\partial u}\right]^{-1}$ and its transpose remain as the linearized forward solve, and the adjoint solve. The former requires forward time-stepping, and the latter requires time-stepping in reverse starting from the final observation time. See e.g., \cite{BuiGhattasMartinetal2013,GhattasWillcox21,FarrellHamFunkeEtAl13,zhang2022petsc} for more in-depth discussions of efficient adjoint computations for time-dependent PDE problems.

\subsection{Schematics for derivative-informed neural operators}

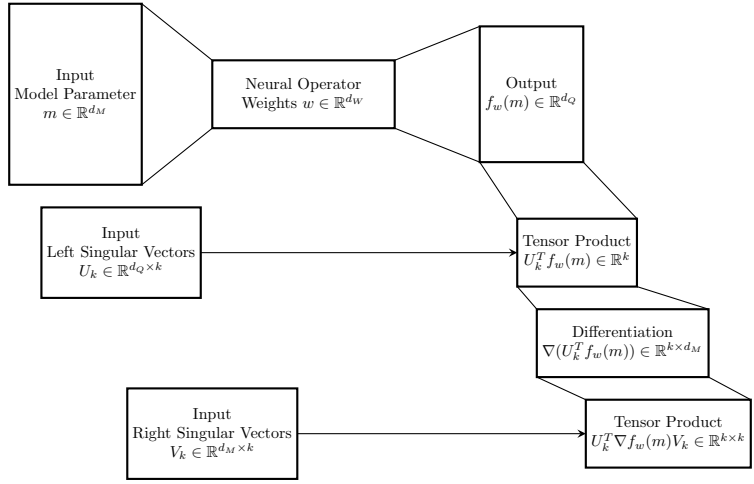
\begin{figure}[H]
\center
\begin{tikzpicture}[scale = 0.6, transform shape,every node/.style={draw,outer sep=0pt,thick}]
\node[bag] (Input) at (0,0) [minimum width=1cm,minimum height=4cm] {Input\\ Model Parameter\\
 $ m \in \mathbb{R}^{d_M}$};
 \node[bag] (InputUk) at (1,-3.5) [minimum width=1cm,minimum height=2cm] {Input\\ Left Singular Vectors\\
 $ U_k \in \mathbb{R}^{d_Q \times k}$};
  \node[bag] (InputVk) at (3,-7.5) [minimum width=1cm,minimum height=2cm] {Input\\ Right Singular Vectors\\
 $ V_k \in \mathbb{R}^{d_M \times k}$};

\node[bag] (NN) at (5,0) [minimum width=4cm,minimum height=1.5cm] {Neural Operator\\
                                                                    Weights $w \in \mathbb{R}^{d_W}$};
\draw (Input.north east) -- (NN.north west) (Input.south east) -- (NN.south west);
\node[bag] (Output) at (10,0)[minimum width=1cm,minimum height=3cm] {Output\\ $f_w(m) \in \mathbb{R}^{d_Q}$};
\draw (NN.north east) -- (Output.north west) (NN.south east) -- (Output.south west);

\node[bag] (fTU) at (11,-3.5)[minimum width=1cm,minimum height=1.5cm] {Tensor Product\\ $U_k^Tf_w(m) \in \mathbb{R}^{k}$};
\draw (Output.south west) -- (fTU.north west) (Output.south east) -- (fTU.north east);
\draw[-stealth] (InputUk.east) -- (fTU.west);
\node[bag] (DfTU) at (12,-5.5)[minimum width=1cm,minimum height=1.5cm] {Differentiation\\ $\nabla(U_k^Tf_w(m)) \in \mathbb{R}^{k\times d_M}$};
\draw (fTU.south west) -- (DfTU.north west) (fTU.south east) -- (DfTU.north east);
\node[bag] (DfTUV) at (13,-7.5)[minimum width=1cm,minimum height=1.5cm] {Tensor Product\\ $U_k^T\nabla f_w(m)V_k \in \mathbb{R}^{k\times k}$};
\draw(DfTU.south west) -- (DfTUV.north west) (DfTU.south east) -- (DfTUV.north east);
\draw[-stealth] (InputVk.east) -- (DfTUV.west);

\end{tikzpicture}
\caption{Schematic operation chart for extracting truncated Jacobian information from neural operators. This schematic is relevant for Propositions \ref{prop:decompose_h1_seminorm} and \ref{prop:submatrix_svd_convergence}. All operations can be vectorized over an additional data dimension.}\label{derivative_schematic}
\end{figure}

\begin{figure}[H]
\center
\begin{tikzpicture}[scale = 0.6, transform shape]


\node[bag] (Point) at (-5,0) [minimum width=1cm,minimum height=2cm] {Reduced Basis Neural Operator\\ in the Full Space};

\node[bag,draw,outer sep=0pt,thick] (Input) at (0,0) [minimum width=1cm,minimum height=4cm] {Input Reduced Basis\\ $\Psi_{\overline{r}_M} \in \mathbb{R}^{d_M \times \overline{r}_M}$\\
 $ \mathbb{R}^{d_M} \ni m \mapsto m_r \in \mathbb{R}^{\overline{r}_M}$};

\node[bag,draw,outer sep=0pt,thick] (NN) at (6.5,0) [minimum width=6.5cm,minimum height=1.5cm] {Reduced Space Neural Operator\\
                                                                    $\phi_{\overline{r}}(m_r,w) \in \mathbb{R}^{\overline{r}_Q}$\\
                                                                    Weights $w \in \mathbb{R}^{d_W}$};
\draw (Input.north east) -- (NN.north west) (Input.south east) -- (NN.south west);

\node[bag,draw,outer sep=0pt,thick] (Output) at (14,0)[minimum width=1cm,minimum height=3cm] {Output Reduced Basis\\ 
                                                                    $\Phi_{\overline{r}_Q} \in\mathbb{R}^{d_Q \times \overline{r}_Q}$\\
                                                                    Affine Shift $b \in \mathbb{R}^{d_Q}$\\
                                                                 $f_w(m) = \Phi_{\overline{r}_Q}\phi_{\overline{r}}(m_r,w) + b \in \mathbb{R}^{d_Q}$};

\draw (NN.north east) -- (Output.north west) (NN.south east) -- (Output.south west);

\node[bag] (Point1) at (-5,-5) [minimum width=1cm,minimum height=2cm] {\textbf{Train in Reduced Space}\\
                                                                        1. Choose $b$, e.g., $b = \texttt{sample\_mean}(q)$\\
                                                                        2. Reduce data:, $m_r = \Psi_{\overline{r}_M}^Tm$,\\
                                                                                          $\widehat{q}_r = \Phi_{\overline{r}_Q}^T(q - b)$\\
                                                                                           $\nabla \widehat{q}_r = \Phi_{\overline{r}_Q}^T\nabla q(m)\Psi_{\overline{r}_M}$ };

\node[bag,draw,outer sep=0pt,thick] (NN1) at (6.5,-5) [minimum width=6.5cm,minimum height=1.5cm] {3. Efficient Reduced Training Problem\\
                                                                    $\min_w \mathbb{E}_\nu[\|\phi_r - \widehat{q}_{\overline{r}}\|^2_2 + \|\nabla_{m_r}\phi_{\overline{r}} -  \nabla \widehat{q}_r\|_F^2]$ };

\draw[dotted] (NN.south west) -- (NN1.north west);
\draw[dotted] (NN.south east) -- (NN1.north east);

\end{tikzpicture}
\caption{Schematic for efficient training of reduced basis neural operators. This schematic is relevant to Theorem \ref{theorem:dim_independent_dino}.}\label{rb_dino_schematic}
\end{figure}
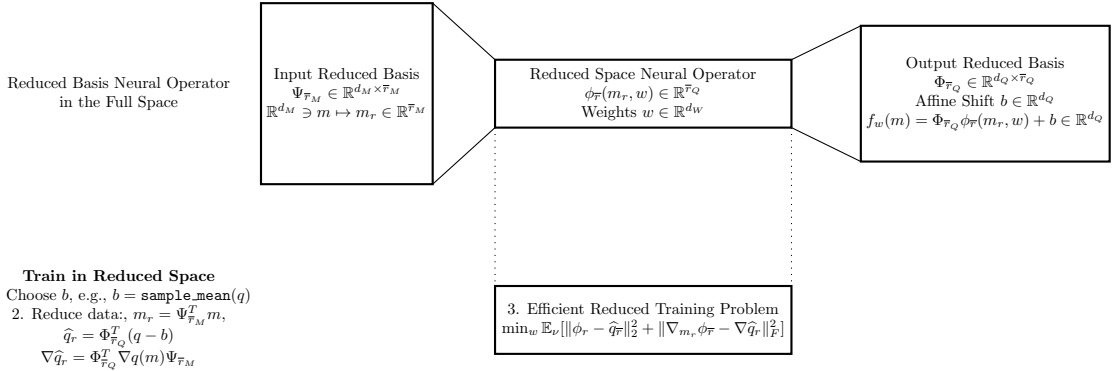

\subsection{Proof of Proposition \ref{prop:decompose_h1_seminorm}} \label{section:proof_of_h1_seminorm}

\begin{proof}
For clarity and completeness of exposition, we begin with two simple auxiliary identities. Let $A \in \mathbb{R}^{m\times n}$ be an arbitrary matrix and $Q_r \in \mathbb{R}^{n\times r}$ have orthonormal columns, (i.e.\ $Q_r^TQ_r = I_r \in \mathbb{R}^{r\times r}$). First, we have the orthogonal decomposition identity (the Pythagorean Theorem for Frobenius norm). We show this identity below as applied on the right side; it can be similarly derived on the left. We have

\begin{align}
\|A\|_F^2 = &\text{tr}(A^TA(I-Q_rQ_r^T))+ \text{tr}(A^TAQ_rQ_r^T) \nonumber \\
              = &\text{tr}((I-Q_rQ_r^T)^TA^TA(I-Q_rQ_r^T)) + \text{tr}(Q_rQ_r^TA^TAQ_rQ_r^T) \nonumber \\
                + &\text{tr}(Q_rQ_r^TA^TA(I-Q_rQ_r^T)^T) + \text{tr}((I-Q_rQ_r^T)^TA^TAQ_rQ_r^T)) \nonumber \\
              = & \|A(I - Q_rQ_r^T)\|_F^2 + \|AQ_rQ_r^T\|_F^2 \nonumber \\
               + &\text{tr}(Q_rQ_r^TA^TA(I-Q_rQ_r^T)^T) +\text{tr}((I-Q_rQ_r^T)^TA^TAQ_rQ_r^T)).
\end{align}
Unpacking the last line we have:
\begin{align}
&\text{tr}(\underbrace{Q_rQ_r^TA^TA}_{D^T}\underbrace{(I-Q_rQ_r^T)}_{E}) + \text{tr}(\underbrace{(I-Q_rQ_r^T)^T}_{E^T}\underbrace{A^TA Q_rQ_r^T}_{D}) \nonumber \\
= & 2 \text{tr}(A^TA(I-Q_rQ_r^T)Q_rQ_r^T) &(\text{since }\text{tr}(D^TE) = \text{tr}(E^TD) =\text{tr}(DE^T)) \nonumber \\
= & 0 &(\text{since }(I-Q_rQ_r^T)Q_rQ_r^T = 0),
\end{align}
so
\begin{equation}\label{orth_decomp_identity}
    \|A\|_F^2  = \|A(I - Q_rQ_r^T)\|_F^2 + \|AQ_rQ_r^T\|_F^2.
\end{equation}

Second, we have

\begin{align}\label{orth_reduction_identity}
    \|AQ_rQ_r^T \|^2_{F(\mathbb{R}^{m\times n})} = \text{tr}(AQ_rQ_r^TQ_rQ_r^TA^) = \text{tr}(AQ_rQ_r^TA^T) = \|AQ_r\|^2_{F(\mathbb{R}^{m\times r})}.
\end{align}

By repeated use of \eqref{orth_decomp_identity} identity we have
\begin{align}\label{eq:jac_error_decomposition}
    \|\nabla q - \nabla f_w\|^2_{F(\mathbb{R}^{d_Q \times d_M})} =  &\|U_rU_r^T(\nabla q - \nabla f_w)V_rV_r^T\|^2_{F(\mathbb{R}^{d_Q \times d_M})} \nonumber \\
    +&\|(I_{d_Q} - U_rU_r^T)(\nabla q - \nabla f_w) (I_{d_M}- V_rV_r^T)\|^2_{F(\mathbb{R}^{d_Q \times d_M})} \nonumber \\
     +&\|(I_{d_Q} - U_rU_r^T)(\nabla q - \nabla f_w)V_rV_r^T\|^2_{F(\mathbb{R}^{d_Q \times d_M})}\nonumber\\
     +&\|U_rU_r^T(\nabla q - \nabla f_w) (I_{d_M}- V_rV_r^T)\|^2_{F(\mathbb{R}^{d_Q \times d_M})}.
\end{align}
We can break up the second term of \eqref{orth_decomp_identity} using the triangle inequality:
\begin{align}
&\|(I_{d_Q} - U_rU_r^T)(\nabla q - \nabla f_w) (I_{d_M}- V_rV_r^T)\|^2_{F(\mathbb{R}^{d_Q \times d_M})} \leq \nonumber \\
&\|(I_{d_Q} - U_rU_r^T)\nabla q (I_{d_M}- V_rV_r^T)\|^2_{F(\mathbb{R}^{d_Q \times d_M})} + \|(I_{d_Q} - U_rU_r^T)\nabla f_w (I_{d_M}- V_rV_r^T)\|^2_{F(\mathbb{R}^{d_Q \times d_M})}
\end{align}

Note next that we have
\begin{subequations} \label{eq:j_decomp_identities}
\begin{align}
    (I_{d_Q} - U_rU_r^T)\nabla q (I_{d_M}- V_rV_r^T) &= \nabla q - U_r \Sigma_r V_r^T\\
    (I_{d_Q} - U_rU_r^T)\nabla q V_rV_r^T &= 0 \\
    U_rU_r^T\nabla q (I_{d_M}- V_rV_r^T) &= 0.
\end{align}
\end{subequations}
Applying these formulae along with \ref{eq:qSVD} and several applications of \eqref{orth_reduction_identity} we get the desired result:

\begin{align}
    \|\nabla q - \nabla f_w\|^2_{F(\mathbb{R}^{d_Q \times d_M})} \leq  &\|\Sigma_r - U_r^T\nabla f_wV_r\|^2_{F(\mathbb{R}^{r \times r})} \nonumber + \sum_{i=r+1}^{\min\{d_Q,d_M\}}\sigma_i^2 \\
    +&\| (I_{d_Q} - U_rU_r^T)\nabla f_w (I_{d_M}- V_rV_r^T)\|^2_{F(\mathbb{R}^{d_Q \times d_M})} \nonumber \\
     +&\|(I_{d_Q} - U_rU_r^T)\nabla f_wV_r\|^2_{F(\mathbb{R}^{d_Q \times r})}\nonumber\\
     +&\|U_r^T\nabla f_w (I_{d_M}- V_rV_r^T)\|^2_{F(\mathbb{R}^{r \times d_M})}.
\end{align}

\end{proof}

\subsection{Proof of Proposition \ref{prop:submatrix_svd_convergence}} \label{section:proof_of_subsampled_svd}

\begin{proof}
Let $B = U_r\Sigma_rV_r^T - \nabla f_w$. We begin with the case of independent left and right column samples. Let $\chi_{i\in [\widetilde{k}]}$ denote the indicator function that the index $i$ is in the index subset $[\widetilde{k}]$, which implies $\mathbb{E}_{[\widetilde{k}] \sim \kappa} [\chi_{i\in [\widetilde{k}]}] = \frac{k}{r}$ for any $i = 1, \dots, r$. We have then that
\begin{align}
    \|U_{[\widehat{k}]}^TBV_{[\widetilde{k}]}\|_{F(k\times k)}^2 = \sum_{i=1}^{r}\sum_{j=1}^r \chi_{i \in [\widehat{k}]}\chi_{j \in [\widetilde{k}]}(U_r^TBV_r)_{ij}^2,
\end{align}
where we used $\chi_{i\in[\widetilde{k}]} = \chi_{i\in[\widetilde{k}]}^2$. Taking expectation and using the independence of the samples, we have
\begin{align}
    \mathbb{E}_{[\widehat{k}],[\widetilde{k}] \sim \kappa}\left[\|U_{[\widehat{k}]}BV_{[\widetilde{k}]})\|_{F(k\times k)}^2\right] = \mathbb{E}_{[\widehat{k}],[\widetilde{k}] \sim \kappa}\left[\sum_{i=1}^{r}\sum_{j=1}^r \chi_{i \in [\widehat{k}]}\chi_{j \in [\widetilde{k}]}(U_r^TBV_r)_{ij}^2 \right] \nonumber \\ 
    = \sum_{i=1}^{r}\sum_{j=1}^r\mathbb{E}_{[\widehat{k}],[\widetilde{k}] \sim \kappa}\left[ \chi_{i \in [\widehat{k}]}\chi_{j \in [\widetilde{k}]}\right](U_r^TBV_r)_{ij}^2   = \frac{k^2}{r^2}\sum_{i=1}^{r}\sum_{j=1}^r(U_r^TBV_r)_{ij}^2.
\end{align}
For the case that the left and right samples are the same we have
\begin{align}
    \|U_{[\widehat{k}]}^TBV_{[\widehat{k}]}\|_{F(k\times k)}^2 &= \sum_{i=1}^{r}\sum_{j=1}^r \chi_{i \in [\widehat{k}]}\chi_{j \in [\widehat{k}]}(U_r^TBV_r)_{ij}^2 \nonumber \\
    &= \sum_{i=1}^{r} \chi_{i \in [\widehat{k}]}(U_r^TBV_r)_{ii}^2 +\sum_{i=1}^{r}\sum_{i \neq j=1}^r \chi_{i \in [\widehat{k}]}\chi_{j \in [\widehat{k}], j\neq i}(U_r^TBV_r)_{ij}^2 \nonumber. \\
\end{align}
Taking expectation we have

\begin{align}
    \mathbb{E}_{[\widetilde{k}]\sim \kappa}\left[\|U^T_{[\widehat{k}]}BV_{[\widehat{k}]})\|_{F(k\times k)}^2\right] = \nonumber
    \frac{k}{r}\sum_{i=1} (U_r^TBV_r)_{ii}^2 + \frac{k}{r}\frac{(k-1)}{(r-1)}\sum_{i=1}^{r}\sum_{i \neq j=1}^r (U_r^TBV_r)_{ij}^2.
\end{align}

\end{proof}

\end{document}